\def\tank#1{\protected@xdef\@thanks{\@thanks
 \protect\footnotetext[0]{#1}}}
\def\bigfoot{

 \@footnotetext}
\newcommand{\ea}{\end{array}}
\numberwithin{equation}{section}
\newtheorem{theorem}{Theorem}[section]
\newtheorem{lemma}{Lemma}[section]
\newtheorem{proposition}{Proposition}[section]
\newtheorem{definition}{Definition}[section]
\newtheorem{condition}{Condition}[section]
\def\beq{\begin{equation}}
\def\nneq{\end{equation}}
\def\bthm{\begin{theorem}}
\def\nthm{\end{theorem}}
\def\blem{\begin{lemma}}
\def\nlem{\end{lemma}}
\def\bprf{\begin{proof}}
\def\nprf{\end{proof}}
\def\bprop{\begin{prop}}
\def\nprop{\end{prop}}
\def\brmk{\begin{rem}}
\def\nrmk{\end{rem}}
\def\bexa{\begin{exa}}
\def\nexa{\end{exa}}
\def\bcor{\begin{cor}}
\def\ncor{\end{cor}}
\def\e{\varepsilon}
\def\RR{\mathbb{R}}
\def\e{{\varepsilon}}
\title[]{Large deviation principle for reflected SPDE on infinite spatial domain }
\author{Ran Wang}
\address[]{Ran Wang, School of Mathematics and Statistics,  Wuhan University,  Wuhan, 430072,
China.}
\email{rwang@whu.edu.cn}
 \author{Beibei Zhang}
\address[]{Beibei Zhang, School of Mathematics and Statistics,  Wuhan University,  Wuhan, 430072,
China.}
\email{zhangbb@whu.edu.cn}
\date{}
\begin{document}
\maketitle

 \noindent {\bf Abstract}
 We study a large deviation principle for a reflected stochastic partial  differential equation on infinite spatial domain. A new sufficient condition for the weak convergence criterion  proposed by Matoussi, Sabbagh and Zhang ({\it Appl. Math. Optim.} 83: 849-879, 2021)  plays an important role in the proof.
 \vskip0.3cm
 \noindent{\bf Keywords} { Stochastic  partial differential equations,  Reflection, Large deviation principle, Weak convergence approach}
 \vskip0.3cm

 \noindent {\bf MSC2020 }{60G15, 60H15, 60G17}
\vskip0.3cm

 \section{Introduction}

Consider the following  stochastic partial differential equation (SPDE for short)   with reflection:
\begin{equation}\label{SPDE}
\left\{\begin{split}
   &\frac{\partial u^{\e}}{\partial t}(t, x)= \frac{\partial^2 u^{\e}}{ \partial x^2} (t,x) + f(x,u^{\e}(t,x))+\sqrt{\e} \sigma(x,u^{\e}(t,x)) \dot W(t, x) +\eta^{\e}(dx,dt);\\
   &u^{\e}(0,\cdot)=u_0;\\
    &u^{\e}(t,0)=0,
 \end{split}\right.
\end{equation}
  where ${\e}>0$,  $(t,x)\in \mathbb R_+^2$;
  $\dot W(t,x)$   denotes the space-time white noise defined on a complete probability space
 $(\Omega, \mathcal F, \{\mathcal F_t\}_{t\ge0}, \mathbb P)$, with  $\mathcal F_t=\sigma\{W(s, x): 0\le s\le t, x\ge 0\}$; $u_0$ is a non-negative continuous function on $\mathbb R_+$; $\eta^{\e}$ is a random measure on $\mathbb R_+^2$ to keep $u^{\e}$ nonnegative and it is a part of the solution pair $(u^{\e}, \eta^{\e})$.

   When  the spatial domain is a finite interval $[0,1]$,     the  SPDE  with reflection was first studied  by Nualart and Pardoux  \cite{NP1992}, and they proved the existence and uniqueness of the solution in the case  of $\sigma(\cdot)=1$.  When the  volatility coefficients  $\sigma$  is Lipschitz with linear growth,    Donati-Martin and  Pardoux  \cite{DMP1993} proved the existence of the solution 
  to the equation,  Xu and Zhang \cite{XZ2009} proved its uniqueness.
Many interesting and important properties have been investigated, for example,    the   reversible probability measure   in \cite{FO2001,Zambotti2001}; the hitting properties of solutions  in \cite{DMZ2006}; the H\"older continuity in \cite{DZ2013};  the invariant measures  in \cite{Kalsi2020} and \cite{YangZ2014}; the large deviation principle (LDP for short)  in \cite{XZ2009} and \cite{YZ2020}; the dimension-free Harnack inequalities  and log-Harnack inequalities  in \cite{Zhang2010} and \cite{X2022}; the hypercontractivity  in \cite{X2019},    and so on.   The reflected SPDEs with fractional noises were studied in \cite{WJW2020, YZ2020}.    We would like to refer the reader to  \cite{Zambotti2017} and references therein for more information on reflected SPDEs.

 When the spatial domain is an infinite interval  $\mathbb R$ or $\mathbb R_+$,   Otobe  \cite{Oto2002}     proved  the existence and  uniqueness  in the case  of $\sigma(\cdot)=1$, and  proved  the existence when $\sigma$  is Lipschitz with linear growth. Uniqueness has also been shown by Hambly and Kalsi \cite{HK2019} for the equation on an unbounded domain provided that $\sigma$ satisfies a Lipschitz condition, with a Lipschitz coefficient which decays exponentially fast in the spatial variable.

 The purpose of this paper is to obtain the LDP for the solution of the reflected SPDE \eqref{SPDE} on an unbounded domain.

\begin{condition}\label{con 1}   Assume that  there are some constants $C_{1,i}>0$, $i=1,2,3$, $R>0,\delta\geq0$ and $r\in \mathbb R$, $f$ and  $\sigma $ are some measurable mappings from  $\mathbb R_+\times\mathbb R$ to $ \mathbb R$ satisfying  the following conditions:
\begin{itemize}
\item[(I)] For every $x\in [0,\infty)$, $u,v\in \mathbb R$,
$$
|f(x,u)-f(x,v)|\le C_{1,1}|u-v|.
$$
\item[(II)] For every $x\in \mathbb [0,\infty)$, $u \in \mathbb R$,
$$
|f(x,u) |\le C_{1,2}\left(e^{r x}+|u|\right).
$$
\item[(III)] For every $x\in [0,\infty)$, $u,v\in \mathbb R$,
$$
|\sigma(x,u)-\sigma(x,v)|\le C_{1,3}e^{-\delta x} |u-v|.
$$
\item[(IV)] For every $x\in [0,\infty)$, $u\in \mathbb R$,
$$
|\sigma(x,u) |\le R  e^{-\delta x  }\left(e^{r x }+ |u|\right).
$$
\end{itemize}
\end{condition}

Fix $r$ in $\mathbb R$.  We say that $u:\mathbb R_+\rightarrow\mathbb R$ is in the space $\mathcal L_r$,  if
\begin{align}\label{eq Lr}
\|u\|_{\mathcal L_r}:=\sup_{x\ge 0}e^{-rx} |u(x)|<\infty.
\end{align}
We say that $u\in \mathcal C_r$ if $u\in \mathcal L_r$ and $u$ is continuous; $u:[0,T]\times \mathbb R_+ \rightarrow\mathbb R$ is in the space $\mathcal C_r^T$, if $u$ is continuous and
\begin{align}\label{eq CrT}
\|u\|_{\mathcal C_{r}^T}:=\sup_{t\in [0,T]}\sup_{x\ge0} e^{-rx}   |u(t,x)|<\infty.
\end{align}
For any $r\in  \RR$, let
\begin{align}\label{eq LrT}
\|u\|_{t, \mathcal L_{r}}:=\sup_{x\ge0} e^{-rx}   |u(t,x)|.
\end{align}

The following definition is taken  from \cite{HK2019}.
\begin{definition}\cite[Definition 4.1]{HK2019}\label{def solution}
  A pair $(u^{\e},\eta^{\e})$  is called a solution of Eq.\,\eqref{SPDE}, if
\begin{itemize}
\item[(i)]  $u^{\e}$ is a continuous $\mathcal F_t$-adapted process taking values in $\mathcal C_{r}^T$ and $u^{\e}(t,x)\ge0$, a.s.
\item[(ii)] $\eta^{\e}$ is a random measure on $\mathbb R_+^2$ such that
 \begin{itemize}
\item[(a)] for every  measurable mapping $\psi:[0,\infty)\times [0,\infty)\rightarrow\mathbb R$,
$$
\int_0^{t}\int_0^{\infty}\psi(s,x)\eta^{\e}(dx,ds) \ \ {\text is}\,\, \mathcal F_t\text{-measurable}.
$$
\item[(b)]  $\int_0^{\infty}\int_0^{\infty} u^{\e}(s,x)\eta^{\e}(dx,ds)=0$, \   a.s.
 \end{itemize}
\item[(iii)] $(u^{\e},\eta^{\e})$ solves the parabolic SPDE in the following sense:
  for every $\varphi\in C_c^{2}(\mathbb R_+) $ with $\varphi(0)=0$,
\begin{equation}
\begin{split}
 \int_0^{\infty}u^{\e}(t,x)\varphi(x)dx =&\,\int_0^{\infty}u_0(x)\varphi(x)dx
+\int_0^t\int_0^{\infty}u^{\e}(s,x)\frac{\partial^2\varphi}{\partial x^2}(x)dxds\\
&\,  +\int_0^t\int_0^{\infty}   f(x, u^{\e}(s,x)) \varphi(x) dxds\\
 &\,+\sqrt{\e}\int_0^t\int_0^{\infty} \sigma(x, u^{\e}(s,x))\varphi(x) W(dx,ds)\\
  &\, +\int_0^t\int_0^{\infty}\varphi(x)\eta^{\e}(dx,ds),\ \ \text{a.s.}
\end{split}
\end{equation}
\end{itemize}
\end{definition}

According to     Hambly and Kalsi \cite{HK2019}, we know the following result about the existence and uniqueness of the solution to Eq.\,\eqref{SPDE}.
\begin{proposition}\cite[Theorem 4.3 and Remark 4.10]{HK2019}
Assume that Condition \ref{con 1} holds with some $\delta>0$. Then Eq.\,\eqref{SPDE} admits a unique solution $u^\e$ in $\mathcal C_r^T$. Moreover, $\mathbb E\left[\|u^\e\|_{\mathcal C_r^T}^p\right]<\infty$ for any $p\geq1$.
 \end{proposition}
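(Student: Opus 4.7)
The plan is to follow the penalization scheme that is classical for obstacle problems. For each $n\in\mathbb{N}$, replace the reflecting measure $\eta^{\e}$ by a penalty $n(u^{\e,n})^{-}$ and consider the (unreflected) SPDE
\begin{equation*}
\frac{\partial u^{\e,n}}{\partial t}=\frac{\partial^{2} u^{\e,n}}{\partial x^{2}}+f(x,u^{\e,n})+\sqrt{\e}\,\sigma(x,u^{\e,n})\dot W(t,x)+n(u^{\e,n})^{-},
\end{equation*}
with the same initial and boundary data. Writing the mild formulation via the Dirichlet heat kernel on $\mathbb{R}_{+}$, a Picard iteration in the weighted Banach space $\mathcal{C}_{r}^{T}$ produces a unique $u^{\e,n}$. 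The point where Condition \ref{con 1} enters is the stochastic convolution: the factor $e^{-\delta x}$ in (III)--(IV) combined with the Gaussian bounds on the heat kernel keeps $\sup_{t\le T}\sup_{x\ge 0}e^{-rx}\|\cdot\|$ finite, which is exactly why $\delta>0$ is needed.

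Next, I would derive moment estimates uniform in $n$ of the form $\mathbb{E}[\|u^{\e,n}\|_{\mathcal{C}_{r}^{T}}^{p}]\le C_{p}$ by taking the weighted $\mathcal{L}_{r}$ norm in the mild formulation, applying a factorization/BDG argument to the stochastic convolution, and closing with Gronwall. The penalty term, when tested against $u^{\e,n}$ itself, yields $-n\int_{0}^{t}\int_{0}^{\infty}[(u^{\e,n})^{-}]^{2}dxds$, which together with the mass estimate $\mathbb{E}\int_{0}^{T}\int_{0}^{\infty}\varphi(x)\, n(u^{\e,n})^{-}(s,x)dx ds\le C(\varphi)$ for test functions $\varphi\in C_{c}^{2}(\mathbb{R}_{+})$ implies that $\eta^{\e,n}(dx,ds):=n(u^{\e,n})^{-}(s,x)dx ds$ is tight. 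Monotonicity of the penalization yields pathwise convergence of $u^{\e,n}\uparrow u^{\e}\ge 0$, and the weak limit $\eta^{\e}$ of $\eta^{\e,n}$ furnishes the candidate solution pair. Verifying Definition \ref{def solution} then amounts to passing to the limit in the weak formulation and, for the contact condition, noting that $\int u^{\e,n}d\eta^{\e,n}=-n\int[(u^{\e,n})^{-}]^{2}\to 0$.

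For uniqueness, take two solutions $(u,\eta)$ and $(\bar u,\bar\eta)$ and use the Skorohod-type inequality $\int (u-\bar u)\,d(\eta-\bar\eta)\le 0$ coming from the contact conditions $\int u\,d\eta=\int\bar u\,d\bar\eta=0$ together with $u,\bar u\ge 0$. Subtracting the two mild formulations, taking the weighted $\mathcal{L}_{r}$ norm, and applying the Lipschitz bounds (I), (III) together with the weight $e^{-\delta x}$ produces $\mathbb{E}\|u-\bar u\|_{t,\mathcal{L}_{r}}^{2}\le C\int_{0}^{t}\mathbb{E}\|u-\bar u\|_{s,\mathcal{L}_{r}}^{2}ds$, whence Gronwall. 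The main obstacle throughout is the control of the stochastic convolution in the weighted norm on the unbounded domain: on $[0,1]$ one relies on finite volume, whereas here the boundedness of $\int_{0}^{\infty}p_{t-s}(x,y)^{2}e^{-2\delta y}(e^{ry}+|u(y)|)^{2}dy$ is only secured by the strict positivity $\delta>0$, which exactly matches the standing hypothesis of the proposition.
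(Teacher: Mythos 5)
The paper itself does not prove this proposition; it is quoted verbatim from Hambly--Kalsi \cite{HK2019}. The method used there (and mirrored in this paper's Proposition \ref{thm solu skeleton} for the skeleton equation) is a \emph{decomposition} argument: one writes $u^\e=v^\e+z^\e$ where $v^\e$ is the mild solution of the unreflected SPDE and $z^\e$ solves the deterministic obstacle problem on $\RR_+$ with obstacle $-v^\e$ (Definition \ref{def3.1teach}, Theorem \ref{thm DOP}), and then one runs a Picard iteration in $\mathcal C_r^T$ closed by the Lipschitz estimate $\|z_1-z_2\|_{\mathcal C_r^T}\le C_{3,1}\|v_1-v_2\|_{\mathcal C_r^T}$ together with Lemmas \ref{lem int}--\ref{6_1}. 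The reflection measure $\eta^\e$ never has to be estimated directly. Your penalization scheme is a genuinely different route, and on a bounded interval it is classical, but as written it has gaps on $\RR_+$ in the $\mathcal C_r^T$ topology.

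The serious gap is in the uniqueness step. The inequality $\int(u-\bar u)\,d(\eta-\bar\eta)\le 0$ is an $L^2(dx)$-pairing statement, used in energy arguments via It\^o's formula for $\|u-\bar u\|_{L^2(0,1)}^2$. Here $u-\bar u\in\mathcal C_r^T$ may grow like $e^{rx}$, so $\|u-\bar u\|_{L^2(\RR_+)}$ need not be finite, and the energy identity is unavailable. If you insert an integrable weight $w(x)\,dx$ to repair that, the pairing becomes $\int w(x)(u-\bar u)\,d(\eta-\bar\eta)$ and the integration by parts producing the Skorohod inequality no longer yields a definite sign (the weight does not commute with $\partial_{xx}$). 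Moreover, you then claim a Gronwall estimate in the weighted sup-norm $\mathcal L_r$, which is a mild-formulation estimate: there the term $\int_0^t\!\int_0^\infty G(t-s,x,y)\,(\eta-\bar\eta)(dy,ds)$ appears, and the Skorohod inequality gives no pointwise control of it. These are precisely the difficulties that the obstacle-decomposition argument in \cite{HK2019} is designed to circumvent.

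A secondary gap concerns the passage $n\to\infty$. The uniform-in-$n$ bound for $\mathbb E[\|u^{\e,n}\|_{\mathcal C_r^T}^p]$ does not follow from a naive Gronwall in the mild formulation, since the penalty $n(u^{\e,n})^-$ has no uniform-in-$n$ bound in any weighted norm a priori; one must exploit its sign via a comparison theorem and monotonicity $u^{\e,n}\uparrow u^\e$. You invoke this, but on $\RR_+$ with multiplicative noise and a weighted sup-norm, the comparison theorem is not off the shelf and has to be established. Also, the localization of the mass bound $\mathbb E\int_0^T\!\int_0^\infty\varphi\,n(u^{\e,n})^-\le C(\varphi)$ to compactly supported $\varphi$ gives only vague convergence of $\eta^{\e,n}$; you then need a separate argument to show the limit measure $\eta^\e$ satisfies the contact condition against the non-compactly supported $u^\e$ on the unbounded domain. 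By contrast, in the decomposition approach the contact condition is inherited automatically from the deterministic obstacle problem.

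Your diagnosis of where $\delta>0$ enters (control of the stochastic convolution in the weighted norm, with $\sigma(\cdot,u)\in\mathcal L_{r-\delta}$ when $u\in\mathcal L_r$, matching the ``loss of $\e$'' in Lemma \ref{lem int4}) is on target; that part you got right.
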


There are some literature about the LDPs for SPDEs with reflections,   e.g.,  Xu and Zhang \cite{XZ2009}, Yang and Zhou \cite{YZ2020}, Wang et al. \cite{WJW2020}, etc. Those works are forced on the  situation  when  the spatial domain is a finite interval with the sup-norm.   The purpose of this paper is to obtain the LDP result when  the spatial domain is an infinite interval with the weighted sup-norm $\|\cdot\|_{\mathcal C_r^T}$  defined by \eqref{eq CrT}. There is some extra complexity introduced in the case of an infinite spatial domain.  Here, we  adopt  a new sufficient condition for the LDP (see Condition \ref{cond1} below)   proposed by Matoussi et al. \cite{MSZ}, and we use some powerful estimates obtained in  Hambly and Kasli \cite{HK2019} (see Appendix of this paper) in the proof.

The rest of this paper is organized as follows. In Section 2, we first recall a sufficient condition of the weak convergence criterion for the LDP given in \cite{MSZ}, then we formulate the main result of the present paper.
In Section 3, the skeleton equation is  studied. Sections 4 and 5 are devoted to verifying   two conditions for the weak convergence criterion. Finally, we give some useful estimates borrowed from \cite{HK2019} in Appendix.

\section{A criterion for large deviations and main result}
In this section, we first recall the definition of the LDP and the weak convergence criterion in \cite{MSZ}.  Then we state the main result of this paper.

\subsection{A criterion for LDP}\label{A Criteria for Large Deviations}
Let $(\Omega,\mathcal{F},\{\mathcal{F}_t\}_{0\leq t\leq T},\mathbb{P})$ be a complete probability space, and the increasing family $\{\mathcal{F}_t\}_{0\leq t\leq T}$ satisfies the usual conditions. Let $\mathcal{E}$ be a Polish space with the Borel $\sigma$-field $\mathcal{B}(\mathcal{E})$.
 \begin{definition}\label{Dfn-Rate function}
     A function $I: \mathcal{E}\rightarrow[0,\infty]$ is called a rate function on
       $\mathcal{E}$,
       if for each $M<\infty$ the level set $\{y\in \mathcal{E}:I(y)\leq M\}$ is a compact subset of $\mathcal{E}$.
    \end{definition}
    \begin{definition}  \label{d:LDP}
       Let $I$ be a rate function on $\mathcal{E}$. The sequence
       $\{u^\e\}_{\e>0}$
       is said to satisfy  a large deviation principle on $\mathcal{E}$ with the rate function $I$, if the following two
       conditions
       hold:
\begin{itemize}
\item[(a)]   For each closed subset $F$ of $\mathcal{E}$,
              $$
                \limsup_{\e\rightarrow 0}\e \log\mathbb{P}(u^\e\in F)\leq\inf_{y\in F}I(y).
              $$

        \item[(b)]  For each open subset $G$ of $\mathcal{E}$,
              $$
                \liminf_{\e\rightarrow 0}\e \log\mathbb{P}(u^\e\in G)\geq-\inf_{y\in G}I(y).
              $$
              \end{itemize}
    \end{definition}
  The Cameron-Martin space associated with the Brownian sheet $\{W(t,x), t\in [0,T], x\in [0,\infty)\}$ is given by
\begin{align}\label{space H}
\mathcal H=\left\{g=\int_0^{\cdot}\int_0^{\cdot} \dot g(s,x) dxds;\,\, \int_0^{T}\int_0^{\infty} \dot g^2(s,x)dxds<\infty \right\}.
\end{align}
Let $$\|g\|_{\mathcal H}:=\left(\int_0^{T}\int_0^{\infty} \dot g^2(s,x)dxds\right)^{\frac{1}{2}},$$ and
\begin{align}\label{definitionSN}
S_N:=\{g\in\mathcal H; \|g\|_{\mathcal H}\leq N\}.
\end{align}
The set  $S_N$ is endowed with the weak convergence topology. Let $\mathcal{A}$ denote the class of all $\mathcal H$-valued $\mathcal F_t$-predictable processes, and let
$$\mathcal{A}_N:=\{\phi\in\mathcal A; \phi(\omega)\in S_N,\,  \mathbb{P}{\text-a.s.}\}.$$

  \begin{condition}\label{cond1} For any $\e>0$, let $\Gamma^{\e}$ be a measurable mapping from $C([0,T]\times\RR_+)$ into $\mathcal{E}$. Suppose that there exists a measurable mapping $\Gamma^{0}: C([0,T]\times\RR_+)\rightarrow\mathcal{E}$ such that
\begin{itemize}
\item[(a)] For every $N<+\infty$, let $g_n$, $g\in S_N$ be such that $g_n\rightarrow g$ weakly as $n\rightarrow\infty$. Then,  $\Gamma^0\left(\int_0^{\cdot}\int_0^{\cdot}  \dot{g}_n(s,x)dxds\right)$ converges to  $\Gamma^0\left(\int_0^{\cdot}\int_0^{\cdot}  \dot{g}(s,x)dxds\right)$ in the space $\mathcal{E}$.

   \item[(b)] For every $N<+\infty$, $\{g^\varepsilon\}_{\varepsilon>0}\subset \mathcal A_N$  and $\theta>0$,
    $$\lim_{\varepsilon\rightarrow 0}\mathbb P\left(\rho(Y^{\varepsilon}, Z^{\varepsilon})>\theta\right)=0, $$
     where  $Y^{\varepsilon}=\Gamma^\varepsilon\left(W+\frac{1}{\sqrt\varepsilon} \int_0^{\cdot}\int_0^{\cdot}  \dot{g}^{\varepsilon}(s,x)dxds\right), Z^{\varepsilon}=\Gamma^0\left(\int_0^{\cdot}\int_0^{\cdot} \dot{g}^{\varepsilon}(s,x)dxds\right)$ and $\rho(\cdot, \cdot)$ stands for the metric in the space $\mathcal{E}$.

 \end{itemize}
  \end{condition}
  Let   $I$ be defined  by
\beq\label{rate function}
I(h):=\inf_{\left\{g\in \mathcal{H};h=\Gamma^0\left(\int_0^{\cdot}\int_0^{\cdot}\dot{g}(s,x)dxds\right)\right\}}  \left\{\frac{1}{2}\|g\|^2_{\mathcal{H}}\right\},\  h\in\mathcal{E},
\nneq
with the convention $\inf\emptyset=\infty$.

Recall the following result from  Matoussi et al.  \cite{MSZ}.
\begin{theorem}\label{thm MSZ} (\cite[Theorem 3.2]{MSZ}) Suppose that Condition \ref{cond1} holds. Then,  the family $\{\Gamma^{\varepsilon}(W)\}_{\varepsilon>0}$ satisfies a large deviation principle with the rate function  $I$ defined by \eqref{rate function}.
\end{theorem}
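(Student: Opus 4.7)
The plan is to follow the Budhiraja--Dupuis weak convergence approach. Since $(\mathcal{E},\rho)$ is Polish, by the equivalence of the LDP and the Laplace principle (Varadhan's lemma together with Bryc's converse) it suffices to prove that, for every bounded continuous $F:\mathcal E\to\mathbb R$,
\[
\lim_{\varepsilon\to 0}\!\left(-\varepsilon\log\mathbb E\!\left[e^{-F(\Gamma^\varepsilon(W))/\varepsilon}\right]\right)=\inf_{h\in\mathcal E}\{F(h)+I(h)\}.
\]
The central tool is the Bou\'e--Dupuis variational representation for functionals of the Brownian sheet, which, after the rescaling $g=\sqrt\varepsilon v$, yields
\[
-\varepsilon\log\mathbb E\!\left[e^{-F(\Gamma^\varepsilon(W))/\varepsilon}\right]=\inf_{g\in\mathcal A}\mathbb E\!\left[\tfrac12\|g\|_{\mathcal H}^2+F(Y^{\varepsilon,g})\right],
\]
where $Y^{\varepsilon,g}=\Gamma^\varepsilon\!\bigl(W+\tfrac{1}{\sqrt\varepsilon}\int_0^{\cdot}\!\int_0^{\cdot}\dot g(s,x)\,dx\,ds\bigr)$. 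The whole argument now reduces to passing to the limit $\varepsilon\to 0$ inside this infimum by means of Conditions~\ref{cond1}(a)--(b).

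For the Laplace upper bound ``$\le$'', I would fix $h\in\mathcal E$ with $I(h)<\infty$ and, given $\delta>0$, choose a \emph{deterministic} $g\in\mathcal H$ with $h=\Gamma^0(\int_0^{\cdot}\!\int_0^{\cdot}\dot g\,dx\,ds)$ and $\tfrac12\|g\|_{\mathcal H}^2\le I(h)+\delta$. Such a $g$ lies in $\mathcal A_N$ for $N$ large enough, so Condition~\ref{cond1}(b) gives $\rho(Y^{\varepsilon,g},h)\to 0$ in probability. Plugging $g$ into the variational formula and using the bounded continuity of $F$ then yields $\limsup_\varepsilon(-\varepsilon\log\mathbb E[\cdots])\le F(h)+I(h)+\delta$; optimising over $h$ and $\delta$ gives the upper bound.

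For the Laplace lower bound ``$\ge$'', I would choose near-optimal controls $g^\varepsilon\in\mathcal A$, with value within $\varepsilon$ of the infimum in the variational formula. Since $-\varepsilon\log\mathbb E[\cdots]\le\|F\|_\infty$, one obtains $\mathbb E[\|g^\varepsilon\|_{\mathcal H}^2]\le 4\|F\|_\infty$, which, together with a standard truncation argument, allows one to assume $g^\varepsilon\in\mathcal A_N$ for some fixed $N$ up to a negligible cost. Condition~\ref{cond1}(b) then replaces $Y^{\varepsilon,g^\varepsilon}$ by the deterministic skeleton $Z^\varepsilon=\Gamma^0(\int_0^{\cdot}\!\int_0^{\cdot}\dot g^\varepsilon\,dx\,ds)$ up to an arbitrarily small error in probability. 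The weakly compact set $S_N$ is a compact Polish space in the weak topology, so $\{g^\varepsilon\}$ is tight; along a subsequence, $g^\varepsilon$ converges in distribution to some $S_N$-valued $g^*$, and by Skorokhod representation one may assume this convergence a.s. Condition~\ref{cond1}(a) then gives $Z^\varepsilon\to h^*:=\Gamma^0(\int_0^{\cdot}\!\int_0^{\cdot}\dot g^*\,dx\,ds)$ a.s. The weak lower semi-continuity of $g\mapsto\|g\|_{\mathcal H}^2$, Fatou's lemma, and the inequality $I(h^*)\le\tfrac12\|g^*\|_{\mathcal H}^2$ combine to give
\[
\liminf_{\varepsilon\to 0}\!\left(-\varepsilon\log\mathbb E[\cdots]\right)\ge\mathbb E[F(h^*)+I(h^*)]\ge\inf_{h\in\mathcal E}\{F(h)+I(h)\}.
\]

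The main conceptual step --- and where Conditions~\ref{cond1}(a) and~(b) do the heavy lifting --- is the passage from the noisy controlled process $Y^{\varepsilon,g^\varepsilon}$ to its deterministic skeleton $Z^\varepsilon$, followed by the identification of the weak limit of $Z^\varepsilon$ with $\Gamma^0(\int\dot g^*\,dx\,ds)$. I expect the main technical nuisance to be the truncation justifying the reduction from $\mathcal A$ to $\mathcal A_N$, which is classical but needs care because the variational infimum is a priori taken over the \emph{unbounded} class $\mathcal A$. A separate, shorter verification is required to check that $I$ is a bona fide rate function: for $M<\infty$, the level set $\{I\le M\}$ is the image of the weakly compact set $S_{\sqrt{2M}}$ under the map $g\mapsto\Gamma^0(\int\dot g\,dx\,ds)$, which is continuous by Condition~\ref{cond1}(a), hence compact in $\mathcal E$.
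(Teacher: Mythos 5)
The paper does not actually prove this theorem: it is quoted verbatim as \cite[Theorem 3.2]{MSZ}, so there is no in-paper argument to compare against. Your reconstruction, however, is essentially the standard Budhiraja--Dupuis weak convergence programme that the cited result is built on, adapted to the Matoussi--Sabbagh--Zhang sufficient condition in which part (b) asserts convergence in \emph{probability} of $\rho(Y^\varepsilon,Z^\varepsilon)$ rather than convergence in distribution of the controlled process itself. Your steps --- reduction to the Laplace principle, the Bou\'e--Dupuis variational formula after the rescaling $g=\sqrt\varepsilon\,v$, the upper bound via a near-optimal deterministic $g$ and Condition~\ref{cond1}(b) applied with the constant family $g^\varepsilon\equiv g$, the lower bound via near-optimal controls, truncation to $\mathcal A_N$, tightness of $S_N$-valued random variables in the weak topology plus Skorokhod representation, then Condition~\ref{cond1}(a) together with weak lower semicontinuity of $\|\cdot\|_{\mathcal H}^2$ and Fatou, and finally the compactness of level sets of $I$ as continuous images of $S_{\sqrt{2M}}$ --- are exactly the ingredients in the MSZ proof. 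The argument is correct as outlined; the only points needing care (the truncation from $\mathcal A$ to $\mathcal A_N$, and replacing $F(Y^{\varepsilon,g^\varepsilon})$ by $F(Z^\varepsilon)$ using both the boundedness and the uniform continuity on compacts of $F$ together with (b)) are precisely the ones you flag. Nothing is missing.
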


\subsection{Main result}\label{Main result}
To state our main result, we need to introduce a map $\Gamma^0$.
Given $g\in\mathcal{H}$,  consider the following deterministic integral equation (the skeleton equation):
\begin{equation}\label{eq skeleton}
\left\{\begin{split}
   &\frac{\partial u^{g}}{\partial t}(t, x)= \frac{\partial^2 u^{g}}{ \partial x^2} (t,x) + f(x,u^{g}(t,x))+  \sigma(x,u^{g}(t,x)) \dot{g}(t,x) +\eta^{g}(dx,dt);\\
&u^{g}(0,\cdot)=u_0;\\
 &u^{g}(\cdot,0)=0.
 \end{split}\right.
\end{equation}
 Analogously to Definition \ref{def solution}, a pair of $(u^g, \eta^g)$ is called a solution of   Eq.\,\eqref{eq skeleton}, if it satisfies the following conditions:
\begin{itemize}
\item[(i)] $u^g$ is in $\mathcal C_{r}^T$ and $u^g(t,x)\ge0$.
\item[(ii)] $\eta^g$ is a   measure on $\RR_+^2$ such that
$\int_0^{\infty}\int_0^{\infty} u(t,x)\eta^g(dx,dt)=0$.
\item[(iii)] $(u^g,\eta^g)$ solves the parabolic  PDE in the following sense:  for every $\varphi\in C_c^{2}(\mathbb R_+)$ with $\varphi(0)=0$, and for every $t\ge0$,
\begin{equation}
\begin{split}
 \int_0^{\infty}u^g(t,x)\varphi(x)dx=&\,\int_0^{\infty}u_0(x)\varphi(x)dx +\int_0^t\int_0^{\infty}u^g(s,x)\frac{\partial^2\varphi}{\partial x^2}(x)dxds\\
 &\,+\int_0^t\int_0^{\infty}   f(x, u^g(s,x)) \varphi(x) dxds\\
&\, +\int_0^t\int_0^{\infty} \sigma(x, u^g(s,x))\varphi(x) \dot{g}(s,x)dxds  \\
&\,  +\int_0^t\int_0^{\infty}\varphi(x)\eta^g(dx,ds).
\end{split}
\end{equation}
\end{itemize}
 By Proposition \ref{thm solu skeleton} below, Eq.\,\eqref{eq skeleton} admits a unique solution $u^g\in \mathcal C_r^T$ under Condition \ref{con 1}.
 For any $g\in\mathcal{H}$, define
 \begin{align}\label{eq Gamma0}
 \Gamma^0\left(\int_0^{\cdot}\int_0^{\cdot}  \dot{g}(s,x)dxds\right):=u^g.
 \end{align}

The following theorem is the main result of this paper.
\begin{theorem}\label{thm LDP}  Assume  that Condition \ref{con 1} holds with some $\delta>0$. Then the family $\{u^{\e}\}_{\e>0}$ in Eq.\,\eqref{SPDE}  satisfies an LDP in the space  $\left(\mathcal C_r^T,\|\cdot\|_{\mathcal C_r^T}\right)$ with the rate function $I$ given by
\begin{equation}\label{eq rate1}
I(h):=\inf_{\left\{g\in \mathcal H, h=\Gamma^0\left(\int_0^{\cdot}\int_0^{\cdot}  \dot{g}(s,x)dxds\right)\right\} }\left\{ \frac12 \int_0^{T}\int_0^{\infty}\dot{g}^2(s,x)dxds \right\}, \,\,h\in\Gamma^0( \mathcal H),
\end{equation}
with the convention $\inf\emptyset=\infty$.
\end{theorem}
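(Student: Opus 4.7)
The plan is to invoke Theorem~\ref{thm MSZ}, so it suffices to verify both parts of Condition~\ref{cond1} with $\mathcal E = \mathcal C_r^T$, $\Gamma^0$ as in \eqref{eq Gamma0}, and $\Gamma^{\e}$ the solution map $W \mapsto u^{\e}$ for Eq.\,\eqref{SPDE} (well defined by Hambly-Kalsi). The two conditions correspond, informally, to the continuity of the skeleton map and to the fact that the $\sqrt{\e}$-noise is asymptotically negligible along controlled perturbations in $\mathcal A_N$.

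For part (a), let $g_n \to g$ weakly in $S_N$ and set $v_n := u^{g_n} - u^g$. The cornerstone observation is the reflection-obstacle inequality
$$
\int_0^t\!\!\int_0^\infty v_n(s,x)\,\bigl(\eta^{g_n}(dx,ds) - \eta^{g}(dx,ds)\bigr) = -\!\int\! u^{g}\, d\eta^{g_n} - \!\int\! u^{g_n}\, d\eta^{g} \;\leq\; 0,
$$
which remains valid after multiplying by any positive weight $e^{-2rx}$, because $u^{g_n}, u^{g}\geq 0$, $\eta^{g_n}, \eta^{g}\geq 0$, and $u^{g_n}\,d\eta^{g_n} = 0 = u^{g}\,d\eta^{g}$. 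Thus the $\eta$-terms drop out of the difference equation when tested against $v_n$. The remaining drift
$$
\int_0^t\!\!\int_0^\infty v_n\,[\sigma(x, u^{g_n})\dot{g}_n - \sigma(x, u^{g})\dot{g}]\,e^{-2rx}\,dxds
$$
I would split into a Lipschitz piece handled by Condition~\ref{con 1}(III) and the bound $\|\dot{g}_n\|_{L^2}\leq N$ via Gr\"onwall, and a linear-in-control piece $\int v_n\,\sigma(x, u^{g})(\dot{g}_n - \dot{g})\, e^{-2rx} dxds$ which converges to $0$ by weak convergence of $g_n$ in $\mathcal H$, combined with a uniform a priori bound on $\|u^{g_n}\|_{\mathcal C_r^T}$ and the exponential decay from Condition~\ref{con 1}(IV). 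The upgrade from a weighted $L^2$ bound to the $\mathcal C_r^T$-norm is then achieved through the mild formulation and the heat-kernel estimates in the appendix.

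For part (b), Girsanov's theorem yields that $Y^{\e}$ solves the controlled reflected SPDE
$$
\partial_t Y^{\e} = \partial_x^2 Y^{\e} + f(x, Y^{\e}) + \sigma(x, Y^{\e})\dot{g}^{\e} + \sqrt{\e}\,\sigma(x, Y^{\e})\dot W + \eta^{Y^{\e}},
$$
while $Z^{\e}$ is the noiseless version. The same reflection-obstacle identity eliminates the $\eta$ contributions from the difference $v^{\e} := Y^{\e} - Z^{\e}$. Writing $v^{\e}$ in weighted mild form and applying BDG, the stochastic convolution $\sqrt{\e}\int_0^t\!\!\int_0^\infty G_{t-s}(x, y)\sigma(y, Y^{\e})W(dy, ds)$ is controlled in $\|\cdot\|_{\mathcal C_r^T}^p$ by $C\sqrt{\e}\,(1+\mathbb E\|Y^{\e}\|_{\mathcal C_r^T}^p)^{1/p}$, via the decay $e^{-\delta y}$ in Condition~\ref{con 1}(IV) and the appendix estimates. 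A Gr\"onwall closure then gives $\mathbb P(\|v^{\e}\|_{\mathcal C_r^T} > \theta) \to 0$ uniformly over $g^{\e}\in\mathcal A_N$, which is precisely Condition~\ref{cond1}(b).

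The main obstacle is the interaction of three features absent from the finite-interval literature \cite{XZ2009,YZ2020,WJW2020}: the reflection measure has no a priori total-mass bound on $\RR_+$, the ambient norm is the weighted sup-norm $\|\cdot\|_{\mathcal C_r^T}$ rather than the plain sup-norm, and both $f$ and $\sigma$ are only controlled modulo the exponential weight. Every Gr\"onwall and BDG step must therefore carry the factor $e^{-rx}$ through the heat semigroup, and the balance closes only because of the exponential decay $e^{-\delta x}$ in Condition~\ref{con 1}(III)-(IV). The Hambly-Kalsi estimates recalled in the appendix on the action of $G_{t-s}$ on exponentially weighted test functions are designed exactly to make this interplay tractable, and will be the workhorse of both verifications.
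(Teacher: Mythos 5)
Your high-level plan --- reduce to Theorem \ref{thm MSZ} by verifying parts (a) and (b) of Condition \ref{cond1}, with $\Gamma^0$ as in \eqref{eq Gamma0} and the Girsanov identification of $Y^{\e}$ with the controlled reflected SPDE \eqref{SPDE g e} --- is precisely the paper's. What differs, and where a gap opens, is the mechanism you use to eliminate the reflection measures.

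You propose the Xu--Zhang monotonicity argument: test the difference equation against $v_n\,e^{-2rx}$ and use $\int v_n\,d(\eta^{g_n}-\eta^{g})\le 0$. This produces a weighted $L^2$ bound on $v_n$, and you then say the upgrade to $\|\cdot\|_{\mathcal C_r^T}$ ``is achieved through the mild formulation.'' That step does not close: once you pass to the mild form of $u^{g_n}-u^{g}$, the convolution $\int_0^t\int_0^\infty G(t-s,x,y)\,(\eta^{g_n}-\eta^{g})(dy,ds)$ reappears, and the energy estimate gives no sup-norm control of it --- eliminating this very term was what testing against $v_n$ accomplished, so you cannot recover a sup-norm bound from it afterwards. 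The paper never touches the reflection measures directly. It writes $u^{g}=v^{g}+z^{g}$, where $v^{g}$ is the free (unreflected) mild solution and $z^{g}$ solves the deterministic obstacle problem with obstacle $-v^{g}$, and then applies the $\mathcal C_r^T$-Lipschitz stability of the obstacle map, Theorem \ref{thm DOP}: $\|z_1-z_2\|_{\mathcal C_r^T}\le C_{3,1}\|v_1-v_2\|_{\mathcal C_r^T}$. This reduces $\|u^{g_n}-u^{g}\|_{\mathcal C_r^T}$ to $\|v^{g_n}-v^{g}\|_{\mathcal C_r^T}$, which contains no reflection term, so the whole Gr\"onwall loop runs directly in the weighted sup-norm with no $L^2$ detour; the same device handles part (b). You would need this stability theorem (or prove an equivalent), not the monotonicity inequality. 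A secondary omission: for the control-linear piece $F_n(t,x)=\int_0^t\int_0^\infty G(t-s,x,y)\sigma(y,u^{g}(s,y))\bigl(\dot g_n(s,y)-\dot g(s,y)\bigr)\,dy\,ds$, weak convergence of $g_n$ in $S_N$ gives only pointwise $F_n(t,x)\to 0$; the paper promotes this to $\|F_n\|_{\mathcal C_r^T}\to 0$ via an Arzel\`a--Ascoli argument (uniform bound and equicontinuity of $e^{-rx}F_n$, through Lemma \ref{6_1}), a step your sketch compresses into a phrase.
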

\begin{proof}
According to Theorem \ref{thm MSZ}, it suffices to show that the conditions  (a) and (b) in Condition \ref{cond1} are satisfied. Condition (a) will be proved in Proposition \ref{3.2theorem}, and Condition (b) will be verified in Proposition \ref{proposition-4-3}. The proof is complete.
\end{proof}
\section{The skeleton equation}

\subsection{The deterministic obstacle problem on $\mathbb R_+$}

Recall the following result for the deterministic obstacle problem on $\mathbb R_+$ from Hambly and Kalsi \cite{HK2019}.

\begin{definition}\cite[Definition 2.5]{HK2019}\label{def3.1teach} Fix $r\in \mathbb R$. Let $v\in \mathcal C_r^T$ be such that $v(t,0)=0$ for every $t\in [0,T]$ and $v(0,\cdot)\le 0$. We say that the pair $(z,\eta)$ satisfies the heat equation with the obstacle $v$ and exponential growth $r$ on $\mathbb R_+$, if
\begin{itemize}
\item[(i)] $z\in \mathcal C_r^T$,  $z(t,0)=0, z(0,x)=0$ and $z\ge v$.

\item[(ii)] $\eta$ is a measure on $(0, \infty)\times [0,T]$ such that  $\int_0^{T}\int_0^{\infty}\left(z(s,x)-v(s,x) \right)\eta(dx, ds)=0$.
\item[(iii)] $z$ weakly solves the PDE
\begin{align}\label{eq DOP}
\frac{\partial z}{\partial t}=\frac{\partial^2 z}{\partial x^2}+\eta.
\end{align}
That is, for every $t\in [0,T]$ and every $\varphi\in  C_c^{\infty} (\mathbb R_+)$ with $\varphi(0)=0$,
$$
\int_0^{\infty} z(t,x)\varphi(x)dx=\int_0^t \int_0^{\infty} z(s,x)\varphi''(x)dxds+\int_0^t \int_0^{\infty} \varphi(x)\eta(dx,ds).
$$

\end{itemize}

\end{definition}

\begin{theorem}\cite[Theorem 2.6]{HK2019} \label{thm DOP} For every $r\in \mathbb R$ and every $v\in \mathcal C_r^T$ such that $v(t,0)=0$ for every $t\in [0,T]$ and $v(0,\cdot)\le 0$,
there exists a unique solution $(z, \eta)$ to the heat equation on $\mathbb R_+$ with Dirichlet boundary condition and the obstacle $v$. Furthermore, if $v_1, v_2\in \mathcal C_r^T$, then
\begin{align}\label{eq comp}
\|z_1-z_2\|_{\mathcal C_r^T}\le C_{3, 1}\|v_1-v_2\|_{\mathcal C_r^T},
\end{align}
where $C_{3, 1}=C_{3, 1}(r,T)\in(0,\infty)$ and $z_i$ is the solution to the obstacle problem corresponding to $v_i$, $i=1,2$.
\end{theorem}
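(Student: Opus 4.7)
\medskip
\noindent \textbf{Proof proposal.} The plan is the classical penalization approach, carried out directly in the weighted space $\mathcal{C}_r^T$ using the Dirichlet heat-kernel estimates on $\mathbb{R}_+$ recalled in the Appendix. For each $\lambda>0$, I would introduce the semilinear heat equation
\begin{equation*}
\partial_t z^\lambda =\partial_{xx} z^\lambda + \lambda(v-z^\lambda)^+,\qquad z^\lambda(t,0)=0,\ z^\lambda(0,\cdot)=0,
\end{equation*}
and realise its solution as a fixed point of Picard iteration on the Dirichlet heat semigroup $P^D_t$. The weighted bounds on $P^D_t$ give a short-time contraction on $\mathcal{C}_r^T$; since the penalisation is globally Lipschitz in $z^\lambda$ with constant $\lambda$, iteration extends the solution to $[0,T]$ and produces a unique $z^\lambda\in\mathcal{C}_r^T$.

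\medskip
\noindent Next, I would establish $\lambda$-independent bounds in $\mathcal{C}_r^T$. The weak maximum principle shows that $\lambda\mapsto z^\lambda$ is monotone non-decreasing, and that $z^\lambda$ is bounded above by $P^D_t v^+ +\|v^+\|_{\mathcal{C}_r^T}$-type supersolutions (taking any classical supersolution of the heat equation dominating $v$). Combined with the weighted heat-kernel estimates, this yields $\sup_\lambda\|z^\lambda\|_{\mathcal{C}_r^T}<\infty$, and also yields equicontinuity of $\{z^\lambda\}$ on compact subsets of $[0,T]\times\mathbb{R}_+$. The monotone limit $z:=\lim_{\lambda\to\infty}z^\lambda$ therefore converges in $\mathcal{C}_r^T$, and the measures $\eta^\lambda(dx,dt):=\lambda(v-z^\lambda)^+\,dx\,dt$, whose mass on compact sets is controlled by testing the PDE against smooth bumps, converge vaguely along a subsequence to some non-negative measure $\eta$. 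Passing to the limit in the weak formulation gives \eqref{eq DOP}; the obstacle condition $z\ge v$ is obtained by noting that uniform bounds on $\eta^\lambda$ force $(v-z^\lambda)^+\to 0$ uniformly on compacts; and the Skorokhod condition $\int(z-v)\,d\eta=0$ follows because $\eta^\lambda$ is supported in $\{z^\lambda\le v\}$ and $z^\lambda\to z$ uniformly.

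\medskip
\noindent For uniqueness, given two solutions $(z_1,\eta_1)$, $(z_2,\eta_2)$ with the same obstacle $v$, the Skorokhod condition gives
\begin{equation*}
\int(z_1-z_2)\,d\eta_1=-\int(z_2-v)\,d\eta_1\le 0,\qquad \int(z_2-z_1)\,d\eta_2\le 0,
\end{equation*}
so that $\int(z_1-z_2)(d\eta_1-d\eta_2)\le 0$. Testing the equation for $z_1-z_2$ against a smooth weighted approximation of $z_1-z_2$ in $\mathcal{C}_r^T$, integrating by parts (the boundary at $x=0$ is killed by the Dirichlet condition, the boundary at $x=\infty$ by the weight), and invoking Gronwall's lemma yields $z_1\equiv z_2$, whence $\eta_1=\eta_2$.

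\medskip
\noindent Finally, for the Lipschitz comparison \eqref{eq comp}, set $\delta:=\|v_1-v_2\|_{\mathcal{C}_r^T}$ and introduce the explicit envelope $\Psi(t,x):=e^{r^2 t+rx}$, which satisfies $\partial_t\Psi=\partial_{xx}\Psi$ and $\Psi(t,x)\ge e^{rx}$ on $[0,T]\times\mathbb{R}_+$. For any $\varepsilon>0$, on the (open) set $\{z_1-z_2>(\delta+\varepsilon)\Psi\}$ one has $z_1>z_2+(\delta+\varepsilon)\Psi\ge v_2+\delta e^{rx}\ge v_1$, hence $\eta_1\equiv 0$ there; thus $w^\varepsilon:=(z_1-z_2-(\delta+\varepsilon)\Psi)^+$ is a non-negative weak subsolution of the heat equation, vanishes at $t=0$ and at $x=0$, and lies in $\mathcal{C}_r^T$. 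The weighted maximum principle (applied to the Dirichlet-heat equation in the unbounded strip, with growth controlled by the $\mathcal{C}_r^T$-norm) forces $w^\varepsilon\equiv 0$; letting $\varepsilon\downarrow 0$ and using symmetry gives $|z_1-z_2|\le \delta\,\Psi\le e^{r^2T}\delta\,e^{rx}$, which is exactly \eqref{eq comp} with $C_{3,1}=e^{r^2T}$.

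\medskip
\noindent The main obstacle in this plan is the last step: the classical weak maximum principle on $\mathbb{R}_+$ requires a growth condition at infinity, and because $w^\varepsilon$ is only controlled in the weighted norm, one must justify the comparison carefully, e.g.\ by cutting off at $x=L$ and letting $L\to\infty$ using the weighted heat-kernel estimates to absorb the artificial boundary term. The penalisation step is standard once one has the weighted semigroup bounds; the genuine work is in propagating those bounds through both the passage to the limit $\lambda\to\infty$ and the final envelope comparison.
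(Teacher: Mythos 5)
The paper does not prove this result; it cites it wholesale as Theorem 2.6 of Hambly--Kalsi \cite{HK2019}, so there is no in-paper argument to compare against. Your penalization scheme $\partial_t z^\lambda = \partial_{xx}z^\lambda + \lambda(v-z^\lambda)^+$ is the standard route going back to Nualart--Pardoux \cite{NP1992} and is, to the best of my knowledge, the route taken in \cite{HK2019} as well, adapted to the weighted spaces $\mathcal{C}_r^T$; so your approach is essentially the same in spirit.

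The one genuinely nontrivial adaptation to the unbounded domain is the Lipschitz estimate \eqref{eq comp}, and your envelope $\Psi(t,x)=e^{r^2 t+rx}$ is exactly the right replacement for the constant barrier used on $[0,1]$: it is a caloric function that dominates $e^{rx}$, and the comparison set $\{z_1-z_2>(\delta+\varepsilon)\Psi\}$ is correctly placed inside $\{z_1>v_1\}$, where $\eta_1$ vanishes, so $w^\varepsilon=(z_1-z_2-(\delta+\varepsilon)\Psi)^+$ is a nonnegative subsolution vanishing at $t=0$ and $x=0$. This yields the explicit constant $C_{3,1}=e^{r^2T}$, which is a perfectly admissible $C_{3,1}(r,T)$. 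You are also right that the only delicate point is the Phragm\'en--Lindel\"of/Tychonoff-type maximum principle needed to conclude $w^\varepsilon\equiv0$ from its $e^{rx}$-growth; exponential growth of order $e^{rx}$ is well within the $e^{cx^2}$ regime where uniqueness for the heat equation holds, so the cutoff-at-$x=L$ argument you sketch does close the gap.

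Two smaller points worth tightening if you were to write this out in full. First, the separate energy-method uniqueness argument is redundant (it is the $\delta=0$ case of \eqref{eq comp}) and, as stated, has an integrability issue: $z_1-z_2\in\mathcal{C}_r^T$ is not in a weighted $L^2$ space for the weight $e^{-2rx}$ you would need, so you must test against $e^{-2(r+\epsilon)x}(z_1-z_2)$ and then track the extra first-order term this weight produces through Gronwall. Second, in the passage to the limit $\lambda\to\infty$, the Skorokhod identity $\int(z-v)\,d\eta=0$ requires both a uniform-in-$\lambda$ bound on $\eta^\lambda$ on compacts \emph{and} uniform (Dini/equicontinuity) convergence $z^\lambda\to z$, since the usual argument splits $\int(z-v)\,d\eta^\lambda$ into $\int(z^\lambda-v)\,d\eta^\lambda\le 0$ plus $\int(z-z^\lambda)\,d\eta^\lambda\to 0$; your sketch names the ingredients but does not spell out this decomposition, which is where the two estimates actually get used together. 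Neither point is a conceptual gap, only a place where the writing needs to be more explicit.
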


\subsection{The skeleton equation}
 In this part, we prove the existence and uniqueness of the solution to the skeleton equation \eqref{eq skeleton}. The following notations will be used throughout the rest of the paper.
 \begin{definition}
 Let $G(t,x,y)$ be the Dirichlet heat kernel on $[0,\infty)$,  that is, for any $t>0, x, y\in\mathbb R_+$,
\begin{equation}\label{eq kernel}
G(t,x,y):=\frac{1}{\sqrt{4\pi t}}\left[\exp\left(-\frac{(x-y)^2}{4t} \right)-\exp\left(-\frac{(x+y)^2}{4t} \right)\right].
\end{equation}
For any $r\in \mathbb R$, let
\begin{equation}\label{eq Gr}
G_r(t,x,y):=e^{-r(x-y)}G(t,x,y), \,\,t>0,\,\,x,\,y\in\RR_+.
\end{equation}
\end{definition}
\begin{proposition}\label{thm solu skeleton} Assume that $g\in \mathcal H$ and Condition \ref{con 1} holds with some $\delta=0$. Then
  Eq.\,\eqref{eq skeleton} admits a unique solution in the space  $\left(\mathcal C_r^T,\|\cdot\|_{\mathcal C_r^T}\right)$.
  \end{proposition}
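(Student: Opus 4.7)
The plan is a Picard iteration in the space $\mathcal C_r^T$, built from the mild formulation of the heat equation and the deterministic obstacle problem of Theorem \ref{thm DOP}. For each $u\in\mathcal C_r^T$ define the ``unconstrained'' auxiliary function
\[
v^u(t,x):=\int_0^\infty G(t,x,y)u_0(y)\,dy+\int_0^t\!\!\int_0^\infty G(t-s,x,y)\bigl[f(y,u(s,y))+\sigma(y,u(s,y))\dot g(s,y)\bigr]\,dyds.
\]
Using the growth bounds (II) and (IV) of Condition \ref{con 1}, together with Cauchy--Schwarz on the $\dot g$-integral (since $\|g\|_{\mathcal H}<\infty$) and the weighted heat-kernel estimates collected in the Appendix, one checks that $v^u\in \mathcal C_r^T$, $v^u(t,0)=0$, and $v^u(0,\cdot)=u_0\ge 0$. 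Applying Theorem \ref{thm DOP} with obstacle $-v^u$ produces a unique pair $(z^u,\eta^u)$ satisfying $z^u\ge -v^u$ and $\int_0^T\!\!\int_0^\infty(z^u+v^u)\,d\eta^u=0$. Set $\Phi(u):=v^u+z^u$. By construction $\Phi(u)\ge 0$, $\Phi(u)(t,0)=0$, $\Phi(u)(0,\cdot)=u_0$, and $(\Phi(u),\eta^u)$ is a weak solution of \eqref{eq skeleton} precisely when $u$ is a fixed point of $\Phi$.

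For the contraction step, pick $u_1,u_2\in\mathcal C_r^T$. The Lipschitz estimate \eqref{eq comp} of Theorem \ref{thm DOP} gives
\[
\|\Phi(u_1)-\Phi(u_2)\|_{t,\mathcal L_r}\le(1+C_{3,1})\,\|v^{u_1}-v^{u_2}\|_{t,\mathcal L_r}.
\]
Combining the Lipschitz bounds (I) and (III) of Condition \ref{con 1} with the weighted kernel estimates in the Appendix, and applying Cauchy--Schwarz to separate the $\dot g$-factor (of finite $L^2$-norm) from the pointwise difference $u_1-u_2$, I would derive an inequality of the form
\[
\|\Phi(u_1)-\Phi(u_2)\|_{t,\mathcal L_r}^2\le C\int_0^t\bigl(1+\|\dot g(s,\cdot)\|_{L^2(\mathbb R_+)}^2\bigr)\,k(t-s)\,\|u_1-u_2\|_{s,\mathcal L_r}^2\,ds,
\]
with some kernel $k\in L^1([0,T])$ furnished by the Appendix. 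Since $\int_0^T\|\dot g(s,\cdot)\|_{L^2}^2\,ds=\|g\|_{\mathcal H}^2<\infty$, iterating this inequality shows that $\Phi^n$ becomes a strict contraction on $(\mathcal C_r^T,\|\cdot\|_{\mathcal C_r^T})$ for $n$ sufficiently large. Consequently the Picard sequence $u_{(n+1)}:=\Phi(u_{(n)})$ (starting from $u_{(0)}\equiv 0$) is Cauchy, its limit $u^g$ together with the associated measure $\eta^{u^g}$ solves \eqref{eq skeleton}, and uniqueness follows from the same inequality applied to two hypothetical solutions via Gronwall.

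The main technical obstacle is controlling the forcing term $\sigma(y,u)\dot g(s,y)$ in the \emph{weighted} sup-norm $\|\cdot\|_{\mathcal C_r^T}$ on the unbounded half-line. Since $\dot g$ is only square-integrable, Cauchy--Schwarz reduces the task to bounding, uniformly in $x\ge 0$, the quantity $\int_0^t\!\!\int_0^\infty G_r(t-s,x,y)^2e^{-2\delta y}\,dyds$, where unboundedness of $[0,\infty)$ competes with the Dirichlet decay of $G$. This uniformity is precisely what the appendix estimates borrowed from Hambly--Kalsi provide, and the decay exponent $\delta\ge 0$ in Condition \ref{con 1} is exactly the structural feature that keeps those integrals finite in the mild formulation.
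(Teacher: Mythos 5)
Your approach — a Picard/fixed-point argument in $\mathcal C_r^T$ built by composing the mild heat-equation map $u\mapsto v^u$ with the obstacle problem's Lipschitz estimate \eqref{eq comp} from Theorem \ref{thm DOP} — is exactly the strategy of the paper's proof, which constructs the same iterates $u_n^g=v_n^g+z_n^g$ and shows they are Cauchy.

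One step as displayed does not quite close, though your stated intent is right. You write
\[
\|\Phi(u_1)-\Phi(u_2)\|_{t,\mathcal L_r}^2\le C\int_0^t\bigl(1+\|\dot g(s,\cdot)\|_{L^2}^2\bigr)\,k(t-s)\,\|u_1-u_2\|_{s,\mathcal L_r}^2\,ds,
\]
with $k\in L^1([0,T])$. If $k(t-s)\sim(t-s)^{-1/2}$ (as $\int_0^\infty G_r(t-s,x,y)^2\,dy$ is), then the product $k(t-s)\|\dot g(s,\cdot)\|_{L^2}^2$ is a product of two merely $L^1$ functions and need not be integrable; a Gronwall iteration on this inequality is not justified as written. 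The fix is the one you yourself hint at: apply Cauchy–Schwarz so that the entire $\dot g$-contribution factors out as the \emph{constant} $\|g\|_{\mathcal H}^2$, leaving only $\int_0^t\bigl(\int_0^\infty G_r^2\,dy\bigr)\|u_1-u_2\|_{s,\mathcal L_r}^2\,ds$. The paper does this at the $p$-th power level with $p>4$ and H\"older with exponents $p/(p-2)$ and $p/2$, so that Lemma \ref{6_1}(i) yields the bounded kernel $|t-s|^{(p-4)/4}$ and the estimate $\|u_n^g-u_{n-1}^g\|_{\mathcal C_r^T}^p\le C\int_0^T\|u_{n-1}^g-u_{n-2}^g\|_{t,\mathcal L_r}^p\,dt$ iterates with factorial decay immediately. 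Also note that \eqref{eq comp} controls the $\mathcal C_r^t$-norm, not the instantaneous $\|\cdot\|_{t,\mathcal L_r}$-norm, so the Gronwall inequality should be written for $\|\cdot\|_{\mathcal C_r^t}$ as the paper does. With these adjustments your argument matches the paper's.
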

\begin{proof}
{\bf  Existence.}   Assume that $g\in S_N$ for some $N\ge1$. We use a Picard argument to prove the existence  of the solution.
 Let
\begin{equation}\label{eq v1}
\begin{split}
v_1^g(t,x)=&\, \int_0^{\infty}G(t, x,y) u_0(y)dy +\int_0^t\int_0^{\infty}G(t-s, x,y)f(y, u_0(y)) dyds\\
&\,\,\, + \int_0^t\int_0^{\infty}G(t-s, x,y)\sigma(y, u_0(y)) \dot g(s,y) dyds,
\end{split}
\end{equation}
with $v_1^g(\cdot,0)=0$.
 Denote     the solution of Eq.\,\eqref{eq DOP}  with the obstacle $v=-v_1^g$ by  $(z_1^g, \eta_1^g)$ and  set $u_1^g=z_1^g+v_1^g$. Then $(u_1^g, \eta_1^g)$ is a solution of the following reflected PDE:
\begin{equation}\label{eq u1}
   \frac{\partial u_1^{g}}{\partial t}(t, x)= \frac{\partial^2 u_1^{g}}{ \partial x^2} (t,x) + f(x,u_0(x))+  \sigma(x,u_0(x))   \dot{g}(t,x) +\eta^{g}_1(dx,dt), 
 \end{equation}
  with $u_1^g(0,x)=u_0(x)$ and $u_1^g(t,0)=0$ for any $t\in[0,T], x\in\RR_+$.
 
 Inductively, put
\begin{equation}\label{eq vn}
\begin{split}
v_n^g(t,x)=&\, \int_0^{\infty}G(t, x,y) u_0(y)dy +\int_0^t\int_0^{\infty}G(t-s, x,y)f(y, u_{n-1}^g(s,y)) dyds\\
&\,\,\, + \int_0^t\int_0^{\infty}G(t-s, x,y)\sigma(y, u_{n-1}^g(s,y))  \dot{g}(s,y) dyds,
\end{split}
\end{equation}
  and define $(z_n^g, \eta_n^g)$ as  the solution of Eq.\,\eqref{eq DOP} with $v=-v_n^g$. Let  $u_n^g=z_n^g+v_n^g$. Then $(u_n^g, \eta_n^g)$ is the solution of the following reflected PDE:
\begin{equation}\label{eq un}
   \frac{\partial u_n^{g}}{\partial t}(t, x)= \frac{\partial^2 u_n^{g}}{ \partial x^2} (t,x) + f(x,u^{g}_{n-1}(t,x))+  \sigma(x,u_{n-1}^{g}(t,x)) \dot{g}(t,x) +\eta^{g}_n(dx,dt), 
 \end{equation}
 with $u_n^g(0,x)=u_0(x)$ and $u_n^g(t,0)=0$ for any $t\in[0,T]  ,x\in\RR_+$.

By Theorem \ref{thm DOP}, we obtain that for $n\geq2$,
\begin{equation}\label{u_ng_u_n-1g}
\begin{split}
& \|u_n^{g}-u_{n-1}^{g}\|_{\mathcal C_r^T}\\
\le&\,  2 C_{3, 1}\|v_n^{g}-v_{n-1}^{g}\|_{\mathcal C_r^T}\\
\le & \,  2  C_{3, 1} \sup_{t\in [0,T]}\sup_{x\ge0} e^{-rx} \left|\int_0^t\int_0^{\infty}G(t-s, x,y) \left( f(y, u_{n-1}^g(s,y))- f(y, u_{n-2}^g(s,y))  \right) dyds \right|\\
&\,+ 2  C_{3, 1} \sup_{t\in [0,T]}\sup_{x\ge0} e^{-rx} \left|\int_0^t\int_0^{\infty}G(t-s, x,y) \left( \sigma(y, u_{n-1}^g(s,y))- \sigma(y, u_{n-2}^g(s,y))  \right)  \dot{g}(s,y) dyds \right|\\
=:&\,2C_{3, 1} \left[I_1(T)+I_2(T)\right].
\end{split}
\end{equation}
By Lemma \ref{lem int}, Condition \ref{con 1} (I) and H\"older's inequality, we have that for any $p>4$, 
\begin{equation}\label{I_1p11}
\begin{split}
I_1 (T)^p \le &\, C^p_{6, 1}\left[\int_0^T\|f(\cdot,u_{n-1}^{g})-f(\cdot,u_{n-2}^{g})\|_{t, \mathcal L_r} dt \right]^p\\
\le &\,C^p_{6, 1}\cdot C^p_{1, 1}  \left[\int_0^T\|u_{n-1}^{g}-u_{n-2}^{g}\|_{t, \mathcal L_r} dt \right]^p\\
\le &\,   C^p_{6, 1}\cdot C^p_{1, 1}\cdot T^{p-1} \int_0^T\|u_{n-1}^{g}-u_{n-2}^{g}\|_{t, \mathcal L_r}^pdt.
\end{split}
\end{equation}
Since $g\in S_N$, by H\"older's inequality and Condition \ref{con 1} (III), we have that for any $p>4$, $t\in[0,T]$,
\begin{equation}\label{smallbound310}
\begin{split}
 &\left| e^{-rx} \int_0^t\int_0^{\infty}G(t-s, x, y) \left( \sigma(y, u_{n-1}^g(s,y))- \sigma(y, u_{n-2}^g(s,y))  \right) \dot{g}(s,y) dyds \right|^p\\
 =&\, \left|\int_0^t\int_0^{\infty}G_r(t-s, x, y) e^{-ry} \left( \sigma(y, u_{n-1}^g(s,y))- \sigma(y, u_{n-2}^g(s,y))  \right)   \dot{g}(s,y) dyds \right|^p\\
\le &\, C^p_{1, 3} \left|\int_0^t\int_0^{\infty}G_r(t-s, x, y)^2 e^{-2ry} \left|  u_{n-1}^g(s,y)- u_{n-2}^g(s,y)\right|^2 dyds   \right|^{\frac{p}{2}}\\
&\,\,\,\,\,\,\, \cdot \left| \int_0^t\int_0^{\infty}  \dot{g}^2(s,y) dyds \right|^{\frac{p}{2}}\\
\le & \, C^p_{1, 3}\cdot N^{p} \left| \int_0^t\left(\int_0^{\infty} G_r(t-s, x, y)^2dy\right)   \left\|u_{n-1}^g-  u^g_{n-2}\right\|^2_{s,\mathcal L_r}ds \right|^{\frac{p}{2}} \\
\le & C^p_{1, 3}\cdot N^{p}\cdot \left( \int_0^t\left(\int_0^{\infty} G_r(t-s, x, y)^2dy\right)^{\frac{p}{p-2}} ds\right)^{\frac{p-2}{2}}\cdot  \int_0^t\left\|u_{n-1}^g-  u^g_{n-2}\right\|^{p}_{s,\mathcal L_r}ds,
  \end{split}
  \end{equation}
where $G_r(t-s, x, y)$ is defined in \eqref{eq Gr}.
By Lemma \ref{6_1} (i), we have
\begin{equation}\label{I2pbound1}
\begin{split}
I_2  (T)^p \leq  C^p_{1, 3}\cdot C_{6,3}\cdot N^{p}\cdot T^{\frac{p-4}{4}}\cdot
\int_0^T\left\|u_{n-1}^g-  u^g_{n-2}\right\|^{p}_{t,\mathcal L_r}dt.
\end{split}
\end{equation}
By \eqref{u_ng_u_n-1g}, \eqref{I_1p11} and \eqref{I2pbound1}, there exists a positive constant $C_{3,2}=C_{3,2}(r, p, N,T)$ such that
\begin{align}\label{eq u n n-1}
\|u_n^{g}-u_{n-1}^{g}\|_{\mathcal C_r^T}^p\le &\, C_{3,2} \int_0^T\left\|u_{n-1}^g-  u^g_{n-2}\right\|^{p}_{t,\mathcal L_r}dt \notag\\
\le &\,  C_{3,2}^{n-1} \int_0^T\int_0^{t_1}\cdots \int_0^{t_{n-2}}\left\|u_{1}^g-  u^g_{0}\right\|^{p}_{t_{n-1},\mathcal L_r}dt_{n-1}\cdots dt_2dt_1\\
\le &\, C_{3,2}^{n-1} \left\|u_{1}^g-  u^g_{0}\right\|^{p}_{\mathcal C_r^T} \cdot \frac{T^{n-1}}{(n-1)!}.\notag
\end{align}
In particular, by using the same procedure, we have
\begin{align*}
\|u_1^g\|_{\mathcal{C}^T_r}<+\infty.
\end{align*}
Therefore, for any $m\ge n\ge 1$,
\begin{equation}\label{eq umn}
\begin{split}
\|u_m^{g}-u_{n}^{g}\|_{\mathcal C_r^T}\le &\, \left\|u_{1}^g-  u^g_{0}\right\|_{\mathcal C_r^T}  \cdot \sum_{k=n}^{m-1} \left(\frac{C_{3,2}^{k} T^k }{k!} \right)^{\frac1p}\\
&\longrightarrow 0,\,\,\,\text{as } n, m\rightarrow \infty.
\end{split}
\end{equation}
Hence, $\{u_n^g\}_{n\geq1}$ is Cauchy in ${\mathcal C_r^T}$, and its limit  is denoted by
\begin{align}\label{limitugung}
u^g(t,x)=\lim_{n\rightarrow\infty} u_n^g(t,x).
\end{align}
Next, we prove that $u^g$ is a solution to Eq.\,\eqref{eq skeleton}.

Let
\begin{equation}\label{eq tildevg}
\begin{split}
\tilde{v}^g(t,x)=&\, \int_0^{\infty}G(t, x,y) u_0(y)dy +\int_0^t\int_0^{\infty}G(t-s, x,y)f(y, u^g(s,y)) dyds\\
&\,\,\, + \int_0^t\int_0^{\infty}G(t-s, x,y)\sigma(s, u^g(s,y)) \dot{g}(s,y) dyds.
\end{split}
\end{equation}
Define $\tilde{u}^g=\tilde{v}^g+\tilde{z}^g$, where $\tilde{z}^g$, together with a measure $\tilde{\eta}^g$ on $(0, \infty)\times [0,T]$, solves the obstacle problem Eq.\,\eqref{eq DOP} with obstacle $-\tilde{v}^g$. Thus, by Definition \ref{def3.1teach}, we  have the following results:
\begin{itemize}
\item[(i)] $z\in \mathcal C_r^T$,  $\tilde{z}^g(t,0)=0, \tilde{z}^g(0,x)=0$ and $\tilde{z}^g\ge -\tilde{v}^g$, which implies that $\tilde{u}^g\ge0$.
\item[(ii)] For any $T>0$, $\int_0^{T}\int_0^{\infty}\left(\tilde{z}^g(s,x)+\tilde{v}^g(s,x) \right)\tilde{\eta}^g(dx, ds)=0$, which is equivalent to $$\int_0^{T}\int_0^{\infty}\tilde{u}^g(s,x)\tilde{\eta}^g(dx, ds)=0.$$
\item[(iii)] $\tilde{z}^g$ weakly solves the PDE
\begin{align}\label{eq DOP1}
\frac{\partial \tilde{z}^g}{\partial t}=\frac{\partial^2 \tilde{z}^g}{\partial x^2}+\tilde{\eta}^g.
\end{align}
\end{itemize}
Putting \eqref{eq tildevg} and \eqref{eq DOP1} together, we obtain  that for every $\varphi\in C_c^{2}(\RR_+)$ with $\varphi(0)=0$, and for every $t\ge0$,
\begin{align*}
 \int_0^{\infty}\tilde{u}^g(t,x)\varphi(x)dx=&\,\int_0^{\infty}\tilde{u}^g(0,x)\varphi(x)dx
 + \int_0^t\int_0^{\infty}\tilde{u}^g(s,x)\frac{\partial^2\varphi}{\partial x^2}(x)dxds\\
 &\, +\int_0^t\int_0^{\infty}   f(x, \tilde{u}^g(s,x)) \varphi(x) dxds\\
&\, +\int_0^t\int_0^{\infty} \sigma(x, \tilde{u}^g(s,x))\varphi(x) \dot g(s,x)dxds  \\
  &\, +\int_0^t\int_0^{\infty}\varphi(x)\tilde{\eta}^g(dx,ds).
\end{align*}
As in the proof of \eqref{eq u n n-1},  by \eqref{limitugung},
 we    obtain that
\begin{align*}
\|u_n^{g}-\tilde{u}^g\|_{\mathcal C_r^T}^p\le&\, C_{3,2} \int_0^T\left\|u_{n-1}^g-  {u}^g\right\|^{p}_{t,\mathcal L_r}dt \\
\le &\, C_{3,2} T\left\|u_{n-1}^g-  {u}^g\right\|^p_{\mathcal C_r^T}\rightarrow 0,
\end{align*}
which implies that $\tilde{u}^g=u^g$. Thus,   $(\tilde{u}^g, \tilde{\eta}^g)$ is a solution to Eq.\,\eqref{eq skeleton}.

{\bf Uniqueness.} Given two solutions $(u_1^g, \eta_1^g)$ and $(u_2^g, \eta_2^g)$ with the same initial value, by the similar method used in the proof of   \eqref{eq u n n-1},
 we obtain that
 \begin{align*}
 \|u_1^{g}-u_2^{g}\|_{\mathcal C_r^T}\leq\,C_{3,2}\int^T_0 \left\|u_1^{g}-u_2^{g}\right\|_{\mathcal C_r^t}^pdt.
\end{align*}
By Gronwall's inequality, we have $u_1^{g}=u_2^{g}$.

The proof is complete.
\end{proof}

\section{Verification of Condition \ref{cond1} $(a)$}
In this section, we will show the continuity of the skeleton equation. Recall that $S_N$ is defined by \eqref{definitionSN}.
\begin{proposition}\label{3.2theorem} Assume  that Condition \ref{con 1} holds with some $\delta=0$.  Then the mapping $u^g: g\in S_N\longmapsto  u^g\in \mathcal C_r^T$ is   continuous.
 \end{proposition}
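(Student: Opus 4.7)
\emph{Plan.} Let $g_n\to g$ weakly in $S_N$; I aim to show $u^{g_n}\to u^g$ in $\mathcal{C}_r^T$. Following the scheme of the existence proof of Proposition~\ref{thm solu skeleton}, I use Theorem~\ref{thm DOP} to reduce the question to convergence of the ``free'' parts of the integral representation. Writing $u^h=v^h+z^h$, with
\[v^h(t,x)=\int_0^\infty G(t,x,y)u_0(y)\,dy+\int_0^t\int_0^\infty G(t-s,x,y)\bigl[f(y,u^h(s,y))+\sigma(y,u^h(s,y))\dot h(s,y)\bigr]dy\,ds,\]
and $(z^h,\eta^h)$ solving the obstacle problem with obstacle $-v^h$, the comparison \eqref{eq comp} gives $\|u^{g_n}-u^g\|_{\mathcal{C}_r^T}\le(1+C_{3,1})\|v^{g_n}-v^g\|_{\mathcal{C}_r^T}$.

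\emph{Decomposition and Lipschitz terms.} Split $v^{g_n}-v^g=A_n+B_n+J_n$, where $A_n$ collects $f(u^{g_n})-f(u^g)$, $B_n$ collects $[\sigma(u^{g_n})-\sigma(u^g)]\dot g_n$, and
\[J_n(t,x):=\int_0^t\int_0^\infty G(t-s,x,y)\,\sigma(y,u^g(s,y))\,[\dot g_n-\dot g](s,y)\,dy\,ds.\]
The calculations \eqref{I_1p11}--\eqref{I2pbound1} from the existence proof, applied with $u^{g_n}$ in place of $u_{n-1}^g$ and $u^g$ in place of $u_{n-2}^g$, yield, for some $p>4$ and a constant $C=C(r,p,N,T)$,
\[\|A_n\|_{t,\mathcal{L}_r}^p+\|B_n\|_{t,\mathcal{L}_r}^p\le C\int_0^t\|u^{g_n}-u^g\|_{s,\mathcal{L}_r}^p\,ds.\]

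\emph{The weak-convergence term: the main obstacle.} The heart of the proof is to show $\|J_n\|_{\mathcal{C}_r^T}\to0$. For each fixed $(t,x)$ the kernel $\psi_{t,x}(s,y):=e^{-rx}\mathbf{1}_{s\le t}G(t-s,x,y)\sigma(y,u^g(s,y))$ lies in $L^2([0,T]\times\mathbb{R}_+)$ --- Condition~\ref{con 1}(IV) with $\delta=0$ together with $u^g\in\mathcal{C}_r^T$ reduces the bound to the Gaussian estimate $\int_0^\infty G(t-s,x,y)^2 e^{2ry}\,dy\lesssim e^{2rx}/\sqrt{t-s}$ --- so the weak convergence of $\dot g_n-\dot g$ in $L^2$ gives pointwise convergence $e^{-rx}J_n(t,x)\to 0$. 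The delicate step is to upgrade this to uniform convergence in the weighted sup norm: my plan is to combine (i)~equicontinuity of $\{e^{-rx}J_n\}$ in $(t,x)$ uniform in $n$, obtained from the H\"older-type heat-kernel estimates collected in the appendix from \cite{HK2019} together with the $L^2$-bound $\|\dot g_n-\dot g\|_{\mathcal H}\le 2N$; (ii)~an Arzel\`a--Ascoli argument on compact rectangles $[0,T]\times[0,M]$; and (iii)~a tail estimate for $x>M$, obtained by truncating $\sigma(\cdot,u^g)\mathbf{1}_{y\le M'}$ and exploiting Gaussian concentration of $G(t-s,x,y)$ around $y=x$. Item~(iii) is the hardest point: since $\delta=0$ gives no exponential decay in $\sigma$, the tail must be controlled by playing the weight $e^{-rx}$ against the growth rate of $\sigma(y,u^g)$, and this is where the sharpness of the appendix estimates of \cite{HK2019} is essential.

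\emph{Conclusion via Gronwall.} Combining the previous estimates,
\[\|u^{g_n}-u^g\|_{t,\mathcal{L}_r}^p\le C\,\|J_n\|_{\mathcal{C}_r^T}^p+C\int_0^t\|u^{g_n}-u^g\|_{s,\mathcal{L}_r}^p\,ds,\]
so Gronwall's inequality together with $\|J_n\|_{\mathcal{C}_r^T}\to 0$ yields $\|u^{g_n}-u^g\|_{\mathcal{C}_r^T}\to 0$, as required.
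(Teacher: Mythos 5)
Your proposal follows essentially the same route as the paper: reduce to $\|v^{g_n}-v^g\|_{\mathcal C_r^T}$ via Theorem~\ref{thm DOP}, split this into two Lipschitz-type terms absorbed by Gronwall plus the weak-convergence term $J_n$ (the paper's $F_n$), obtain pointwise convergence $e^{-rx}J_n(t,x)\to 0$ from the $L^2$ bound on the kernel, establish a uniform two-variable H\"older modulus from Lemma~\ref{6_1}(ii),(iii), and close by Gronwall once $\|J_n\|_{\mathcal C_r^T}\to 0$. The estimates for the Lipschitz terms and the setup of the Gronwall step match the paper's.

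Where the two arguments part ways is the passage from pointwise to uniform convergence of $e^{-rx}J_n$. The paper invokes Arzel\`a--Ascoli directly on $[0,T]\times\mathbb R_+$ after proving uniform boundedness plus equicontinuity; but this domain is not compact, and a bounded equicontinuous family on a non-compact space need not be relatively compact in the sup norm (think of $f_n(x)=\max(0,1-|x-n|)$). You are right to refuse that shortcut and to insert, as your item (iii), a tail estimate $\sup_n\sup_{t}\sup_{x>M}|e^{-rx}J_n(t,x)|\to 0$ as $M\to\infty$. However, your proposal leaves (iii) as a plan rather than a proof, and with $\delta=0$ the obstacle is real: Condition~\ref{con 1}(IV) then gives only $e^{-ry}|\sigma(y,u^g(s,y))|\le R(1+\|u^g\|_{\mathcal C_r^T})$ with no decay in $y$, and since $G_r(t-s,x,\cdot)$ concentrates near $y=x$ one finds $\int_0^t\int_0^\infty G_r(t-s,x,y)^2\,dy\,ds$ stays bounded away from $0$ as $x\to\infty$, so the Cauchy--Schwarz bound $|e^{-rx}J_n(t,x)|\le 2N\,\|\psi_{t,x}\|_{L^2}$ does not vanish in the tail. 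Under $\delta>0$ (which is what the main Theorem~\ref{thm LDP} and Propositions~\ref{lemma-5-1} and~\ref{proposition-4-3} actually assume), one gets the extra factor $e^{-\delta y}$, the Gaussian concentration near $y=x$ turns it into $e^{-\delta x}$, and the tail estimate follows; so the proof closes in that case. As written, though, item (iii) is an acknowledged but unfilled gap in your argument, and it is also the point at which the paper's own proof is less than fully rigorous.
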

\begin{proof}
Let $g$, $g_n$, $n\geq1$, be in $S_N$ such that $g_n\rightarrow g$ weakly as $n\rightarrow\infty$. Let $u^{g_n}$ and $u^{g}$ be the solutions to Eq.\,\eqref{eq skeleton} associated with $g_n$ and $g$, respectively. It is sufficient to prove
\begin{align}\label{eq Gn_gCrT}
\lim_{n\rightarrow\infty}\|u^{g_n}-u^{g}\|_{\mathcal C_r^T}=0.
 \end{align}
Let
\begin{equation}\label{eq vgn}
\begin{split}
   v^{g_n}(t,x)=&\, \int_0^{\infty}G(t, x,y) u_0(y)dy +\int_0^t\int_0^{\infty}G(t-s, x,y)f(y, u^{g_n}(s,y)) dyds\\
&\,\,\, + \int_0^t\int_0^{\infty}G(t-s, x,y)\sigma(y, u^{g_n}(s,y))  \dot{g}_n(s,y) dyds,
 \end{split}
 \end{equation}
and
\begin{equation}\label{eq vg}
\begin{split}
   v^{g}(t,x)=&\, \int_0^{\infty}G(t, x,y) u_0(y)dy +\int_0^t\int_0^{\infty}G(t-s, x,y)f(y, u^{g}(s,y)) dyds\\
&\,\,\, + \int_0^t\int_0^{\infty}G(t-s, x,y)\sigma(y, u^{g}(s,y))  \dot{g}(s,y) dyds.
\end{split}
 \end{equation}
By Theorem \ref{thm DOP}, we obtain that
\begin{equation}\label{ug_nugI_1I_2I_3}
\begin{split}
& \|u^{g_n}-u^{g}\|_{\mathcal C_r^T}\\
\le&\,  2 C_{3,1}\|v^{g_n}-v^{g}\|_{\mathcal C_r^T}\\
\le & \,  2   C_{3,1} \sup_{t\in[0,T]}\sup_{x\ge0} e^{-rx} \left|\int_0^t\int_0^{\infty}G(t-s, x,y) \left( f(y, u^{g_n}(s,y))- f(y, u^{g}(s,y))  \right) dyds \right|\\
&\, \, \, + 2   C_{3,1}\sup_{t\in[0,T]}\sup_{x\ge0} e^{-rx} \left|\int_0^t\int_0^{\infty}G(t-s, x,y) \left( \sigma(y, u^{g_n}(s,y))- \sigma(y, u^g(s,y))  \right) \dot{g}_n(s,y) dyds \right|\\
&\, \, \, + 2   C_{3,1}\sup_{t\in[0,T]} \sup_{x\ge0} e^{-rx} \left|\int_0^t\int_0^{\infty}G(t-s, x,y) \sigma(y, u^{g}(s,y))\left( \dot{g}_n(s,y)-  \dot{g}(s,y)  \right)dyds \right|\\
\,=:& 2   C_{3,1}\left[K_1(T)+K_2(T)+K_3(T)\right].
\end{split}
\end{equation}
 By using the same method as that in the proofs of \eqref{I_1p11} and \eqref{I2pbound1},  we have that for any $p>4$,
\begin{align}
K_1 (T)^p
\le &\, C^p_{6,1}\cdot C^p_{1,1}\cdot T^{p-1} \int_0^T\|u^{g_n}-u^g\|_{\mathcal C_r^t}^pdt;\label{I_1estimate}\\
  K_2 (T)^p
\le &\,C^p_{1,3}\cdot C_{6,3}\cdot N^{p}\cdot T^{\frac{p-4}{4}}\int_0^T\left\|u^{g_n}-u^g\right\|_{\mathcal C_r^t}^pdt.  \label{I_2estimate}
\end{align}
Set
\begin{align*}
F_n(t,x):=\int_0^t\int_0^{\infty}G(t-s, x,y) \sigma(y, u^{g}(s,y))\left( \dot{g}_n(s,y)-  \dot{g}(s,y)  \right)dyds.
\end{align*}
Putting \eqref{ug_nugI_1I_2I_3}, \eqref{I_1estimate} and \eqref{I_2estimate} together, we have that there exists a positive  $C_{4,1}=C_{4,1}(r,p,N,T)$ such that
\begin{align*}
\|u^{g_n}-u^{g}\|^p_{\mathcal C_r^T}\leq\,C_{4,1} \int_0^T\|u^{g_n}-u^{g}\|^p_{\mathcal C_r^t}dt+2^pC^p_{3,1}\cdot\|F_n\|_{\mathcal C_r^T}^p.
\end{align*}
 By the Gronwall inequality, we have
\begin{align}\label{Gronadd4.7}
\|u^{g_n}-u^{g}\|^p_{\mathcal C_r^T}\leq&\,2^pC^p_{3,1}\cdot\|F_n\|_{\mathcal C_r^T}^p\cdot\left(1+C_{4,1}T\cdot e^{C_{4,1}T}\right).
\end{align}
Consequently, if \begin{align}\label{limFn0Crt}
 \lim_{n\rightarrow\infty}\|F_n\|_{\mathcal C_r^T}=0,
\end{align}
 then  \eqref{eq Gn_gCrT} holds.
Now, it remains to   prove \eqref{limFn0Crt}. 

By Condition \ref{con 1} (IV), H\"older's inequality and Lemma \ref{6_1} (i),  there exists $C_{4,2}=C_{6,3}^{\frac{2}{p}}\cdot R\cdot  T^{\frac{1}{2}}$ such that for  any fixed $x\in\RR_+$, $t\in[0,T]$,
\begin{align}\label{4-7est}
&\int_0^t\int_0^{\infty}G(t-s, x, y)^2  \left|\sigma(y, u^g(s,y))\right|^2dyds \notag\\
\leq&\,2R^2\cdot e^{2r x}\cdot\int_0^t\int_0^{\infty}G_r(t-s, x, y)^2\left(1+e^{-2r y}|u^g(s,y)|^2 \right)  dyds\notag\\
\leq&\,2R^2\cdot e^{2r x}\cdot\int_0^t\left(\int_0^{\infty}G_r(t-s, x, y)^2dy\right)\left(1+\|u^g\|^2_{s,\mathcal L_r} \right)ds\\
\leq&\, 2R^2\cdot e^{2r x}\cdot\left[\int_0^t\left(\int_0^{\infty}G_r(t-s, x, y)^2  dy\right)^{\frac{p}{p-2}}ds\right]^{\frac{p-2}{p}}\cdot \left[T^{\frac{2}{p}}+\left(\int^t_0\|u^g\|^p_{s,\mathcal L_r}ds\right)^{\frac{2}{p}}\right]\notag\\
\leq&\, 2C_{6,3}^{\frac{2}{p}}\cdot R^2\cdot   T^{\frac{1}{2}}\cdot e^{2r x}\cdot\left(1+\|u^g\|^2_{\mathcal C^T_r}\right)<+\infty.\notag
\end{align}
Consequently, since $g_n$ converges weakly to  $g$ in $\mathcal H$,   we have that for any fixed $x\in\RR_+$, $t\in[0,T]$,
\begin{align}\label{limggn0}
\lim_{n\rightarrow+\infty}F_n(t,x) =0.
\end{align}
We claim that $\{F_n(t,x); t\geq0, x\geq0\}_{n\geq1}$ is relatively compact in the space $\left(\mathcal C_r^T,\|\cdot\|_{\mathcal C_r^T}\right)$, or equivalently, $\{e^{-rx}F_n(t,x); t\geq0, x\geq0\}_{n\geq1}$ is relatively compact in $C([0,T]\times\RR_+)$ with respect to the sup-norm $\|f\|_\infty=\sup\limits_{t\in[0,T]}\sup\limits_{x\in\RR_+}|f(t,x)|$. Combining this fact with \eqref{limggn0}, we obtain \eqref{limFn0Crt}. 

Notice that
\begin{equation}\label{F_n_estimate}
\begin{split}
\|F_n\|_{\mathcal C_r^T}
\leq&\,\sup_{t\in [0,T]}\sup_{x\ge0} \int_0^t\int_0^{\infty}G_r(t-s, x,y)e^{-ry}\left|\sigma(y, u^{g}(s,y))\right| \left| \dot{g}_n(s,y)-  \dot{g}(s,y)  \right|dyds \\
\leq&\, \left(\sup_{t\in [0,T]}\sup_{x\ge0} \int_0^t\int_0^{\infty}G_r(t-s, x,y)^2e^{-2ry}\left|\sigma(y, u^{g}(s,y))\right|^2dyds\right)^{\frac{1}{2}}\\
&\,\, \cdot \left(\int_0^T\int_0^{\infty}\left| \dot{g}_n(s,y)-  \dot{g}(s,y)\right|^2dyds   \right)^{\frac{1}{2}}.
\end{split}
\end{equation}
By using the same method as that in the proof of \eqref{4-7est}, we have that for any $t\in[0,T]$,
\begin{align}\label{F_n_estimate1}
&\sup_{t\in [0,T]}\sup_{x\ge0} \int_0^t\int_0^{\infty}G_r(t-s, x,y)^2e^{-2ry}\left|\sigma(y, u^{g}(s,y))\right|^2dyds\notag\\
\leq&\,\sup_{t\in [0,T]}\sup_{x\ge0}R^2\cdot\int_0^t\int_0^{\infty}G_r(t-s, x,y)^2\left(1+e^{-ry}| u^{g}(s,y)|\right)^2dyds\\
\leq&\, C_{6,3}^{\frac{2}{p}}\cdot R^2\cdot   T^{\frac{1}{2}}\cdot \left(1+\|u^g\|_{\mathcal C^T_r}\right)^2<+\infty.\notag
\end{align}
Since $g_n, g\in S_N$, $N\ge1$, by \eqref{F_n_estimate} and \eqref{F_n_estimate1}, we have
\begin{align*}
\sup_{n\geq1}\|F_n\|_{\mathcal C_r^T}\leq&\, C_{6,3}^{\frac{1}{p}}\cdot R\cdot  T^{\frac{1}{4}}\cdot \left(1+\|u^g\|_{\mathcal C^T_r}\right)\cdot\left(\int_0^T\int_0^{\infty}\left| \dot{g}_n(s,y)-  \dot{g}(s,y)\right|^2dyds   \right)^{\frac{1}{2}}
<+\infty.
\end{align*}
On the other hand, we claim that $\{e^{-rx}F_n(t,x); t\geq0, x\geq0\}_{n\geq1}$ is also equi-continuous. Then,  according to the Arzel\`a-Ascoli theorem, we obtain that
 $\{F_n\}_{n\geq1}$ is relatively compact in the space $\left(\mathcal C_r^T,\|\cdot\|_{\mathcal C_r^T}\right)$.

Next, we   prove that $\{e^{-rx}F_n(t,x); t\geq0, x\geq0\}_{n\geq1}$ is equi-continuous. Notice that
\begin{equation}\label{tsxyminus1}
\begin{split}
&\left|e^{-ry}F_n(s,y)-e^{-rx}F_n(t,x)\right|\\
\leq&\,\left|e^{-ry}F_n(s,y)-e^{-rx}F_n(s,x)\right|+\left|e^{-rx}F_n(s,x)-e^{-rx}F_n(t,x)\right|.
\end{split}
\end{equation}
Since $g_n, g\in S_N$,  $N\ge1$, by H\"older's inequality, Condition \ref{con 1} (IV) and Lemma \ref{6_1} (iii), we have that for any $p>4$ and for any $s\in[0,T]$, $x, y\in \RR_+$,
\begin{equation}\label{spacexyminus1}
\begin{split}
&\left|e^{-ry}F_n(s,y)-e^{-rx}F_n(s,x)\right|^p\\
=&\,\left|\int^s_0\int^\infty_0\left(G_r(s-u, y,z)-G_r(s-u, x,z)\right)e^{-rz}\sigma(z, u^{g}(u,z))\left( \dot{g}_n(u,z)- \dot{g}(u,z)  \right)dzdu\right|^p\\
\leq&\,\left(\int^s_0\int^\infty_0\left|G_r(s-u, y,z)-G_r(s-u, x,z)\right|^2e^{-2rz}|\sigma(z, u^{g}(u,z))|^2dzdu\right)^{\frac{p}{2}}\\
&\,\cdot\left|\int^s_0\int^\infty_0\left( \dot{g}_n(u,z)- \dot{g}(u,z)  \right)^2dzdu\right|^{\frac{p}{2}}\\
\leq&\,R^p\cdot N^p\left(\int^s_0\left(\int^\infty_0\left|G_r(s-u, y,z)-G_r(s-u, x,z)\right|^2dz\right)\left(1+\|u^{g}\|_{u,\mathcal{L}_r}\right)^2du\right)^{\frac{p}{2}}\\
\leq&\,R^p\cdot N^p\left(\int^s_0\left(\int^\infty_0\left|G_r(s-u, y,z)-G_r(s-u, x,z)\right|^2dz\right)^{\frac{p}{p-2}}du\right)^{\frac{p-2}{2}}\\
&\,\,\,\,\, \cdot\left(\int^s_0\left(1+\|u^{g}\|_{u,\mathcal{L}_r}\right)^pdu\right)\\
\leq&\,C_{6,5}\cdot R^p\cdot N^p\cdot T\cdot
\left(1+\|u^g\|_{\mathcal C^T_{r}}\right)^p|x-y|^{\frac{p-4}{2}}.
\end{split}
\end{equation}
Similarly, by H\"older's inequality, Condition \ref{con 1} (IV) and Lemma \ref{6_1} (ii), we have that for any $p>4$ and for any $s, t\in[0,T]$, $x\in \RR_+$,
\begin{align}\label{timeminus1}
\left|e^{-rx}F_n(s,x)-e^{-rx}F_n(t,x)\right|^p\leq\,C_{6,4}\cdot R^p\cdot N^p\cdot T\cdot\left(1+\|u^g\|_{\mathcal C^T_{r}}\right)^p|s-t|^{\frac{p-4}{4}}.
\end{align}
Putting \eqref{tsxyminus1}, \eqref{spacexyminus1} and \eqref{timeminus1} together, we have that  for any $p>4$ and for any $s, t\in[0,T]$, $x, y\in \RR_+$,
\begin{align*}
\left|e^{-ry}F_n(s,y)-e^{-rx}F_n(t,x)\right|^p\leq\,C_{4,3}\left(1+\|u^g\|^p_{\mathcal C^T_{r}}\right)\cdot\left(|s-t|^{\frac{p-4}{4}}+|x-y|^{\frac{p-4}{2}}\right),
\end{align*}
 where $C_{4,3}=C_{4,3}(r,p,N,R,T)\in(0,\infty)$.

 The proof is complete.
\end{proof}

\section{Verification of Condition \ref{cond1} $(b)$}
For any $\e>0$, let  $\Gamma^{\e}:  C([0,T]\times\RR_+) \rightarrow  \mathcal C_r^T$  be the map defined by
 \begin{align}\label{GAmmae_definition}
 \Gamma^{\e}\left(W(\cdot,\cdot)\right):=u^\e,
 \end{align}
  where $u^\e$ stands for the solution to  Eq.\,\eqref{SPDE}.

Let $\{g^{\e}\}_{\e>0}\subset \mathcal A_N$, $N\geq 1$, be a given family of stochastic processes. Denote
$$\bar{u}^{\e} :=\Gamma^0\left(\int_0^{\cdot}\int_0^{\cdot} \dot{g}^{\e}(s,x)dxds\right),$$ where $\Gamma^{0}$ is defined by \eqref{eq Gamma0}.
Then, $\bar{u}^{\e}$, together with a random measure $\bar{\eta}^{\e}$, solves the following equation:
  \begin{equation}\label{SPDE g e ske}
\left\{\begin{split}
   &\frac{\partial \bar{u}^{\e}}{\partial t}(t, x)=  \frac{\partial^2 \bar{u}^{\e}}{ \partial x^2} (t,x) + f(x,\bar{u}^{\e}(t,x))
  + \sigma(x,\bar{u}^{\e}(t,x))  \dot{g}^{\e}(t,x) +\bar{\eta}^{\e}(dx,dt);\\
   & \bar{u}^{\e}(0,\cdot)=u_0;\\
    & \bar{u}^{\e}(\cdot,0)=0.
 \end{split}\right.
\end{equation}
Denote $$\tilde{u}^{\e}:=\Gamma^{\e}\left(W({\cdot})+\frac{1}{\sqrt \e} \int_0^{\cdot}\int_0^{\cdot}{\dot{g}}^{\e}(s,x)dxds\right),$$
 where $\Gamma^{\e}$ is defined by \eqref{GAmmae_definition}. By the Girsanov theorem, it is easily to see that
$\tilde{u}^{\e}$, together with a random measure $\tilde{\eta}^{\e}$, solves the following stochastic obstacle problem:
 \begin{equation}\label{SPDE g e}
\left\{\begin{split}
  &\frac{\partial \tilde{u}^{\e}}{\partial t}(t, x)= \frac{\partial^2 \tilde{u}^{\e}}{ \partial x^2} (t,x) + f(x,\tilde{u}^{\e}(t,x))+\sqrt{\e} \sigma(x,\tilde{u}^{\e}(t,x)) \dot W(t, x)\\
   &\qquad\quad\quad \quad+ \sigma(x,\tilde{u}^{\e}(t,x)) \dot{g}^{\e}(t,x) +\tilde{\eta}^{\e}(dx,dt);\\
   & \tilde{u}^{\e}(0,\cdot)=u_0;\\
    & \tilde{u}^{\e}(\cdot,0)=0.
 \end{split}\right.
\end{equation}
 \begin{proposition}\label{proposition-4-3} Assume  that Condition \ref{con 1} holds with some $\delta>0$.
Then, for every $N\geq 1$, $\{g^\e\}_{\e>0}\subset \mathcal A_N$ and for any $p\geq1$, we have
    $$\lim_{\e\rightarrow 0}\mathbb E\left(\left \|\tilde{u}^{\e}-\bar{u}^{\e}\right\|_{\mathcal C_{r}^T}^p \right)=0. $$
    \end{proposition}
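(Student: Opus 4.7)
The plan is to reduce the problem, via the comparison estimate \eqref{eq comp} for the deterministic obstacle problem, to controlling the difference of the ``obstacle-free'' parts of $\tilde u^\e$ and $\bar u^\e$, and then to close a Gronwall inequality for $t\mapsto \mathbb E\|\tilde u^\e-\bar u^\e\|_{\mathcal C_r^t}^p$. Concretely, I would set
\begin{align*}
\tilde v^\e(t,x)=&\,\int_0^\infty G(t,x,y)u_0(y)\,dy+\int_0^t\int_0^\infty G(t-s,x,y)f(y,\tilde u^\e(s,y))\,dy\,ds\\
&\,+\int_0^t\int_0^\infty G(t-s,x,y)\sigma(y,\tilde u^\e(s,y))\dot g^\e(s,y)\,dy\,ds\\
&\,+\sqrt\e\int_0^t\int_0^\infty G(t-s,x,y)\sigma(y,\tilde u^\e(s,y))\,W(dy,ds),
\end{align*}
and define $\bar v^\e$ analogously, with $\bar u^\e$ in place of $\tilde u^\e$ and no stochastic integral. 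Applying Theorem \ref{thm DOP} to the obstacle problems with obstacles $-\tilde v^\e$ and $-\bar v^\e$ gives
\begin{align*}
\|\tilde u^\e-\bar u^\e\|_{\mathcal C_r^t}\le 2C_{3,1}\|\tilde v^\e-\bar v^\e\|_{\mathcal C_r^t},\qquad t\in[0,T],
\end{align*}
and I would decompose $\tilde v^\e-\bar v^\e=J_1^\e+J_2^\e+J_3^\e$, where $J_1^\e$ is the difference coming from $f$, $J_3^\e$ is the difference coming from the controlled drift $\sigma\,\dot g^\e$, and $J_2^\e$ is the It\^o integral $\sqrt\e\int_0^t\int_0^\infty G(t-s,x,y)\sigma(y,\tilde u^\e(s,y))W(dy,ds)$.

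The estimates for $J_1^\e$ and $J_3^\e$ follow the pattern of the proof of Proposition \ref{3.2theorem}: Condition \ref{con 1}(I) together with Lemma \ref{lem int} controls $J_1^\e$, and Condition \ref{con 1}(III) together with Lemma \ref{6_1}(i) and the bound $\|g^\e\|_{\mathcal H}\le N$ controls $J_3^\e$, yielding, for every $p>4$,
\begin{align*}
\mathbb E\|J_1^\e\|_{\mathcal C_r^t}^p+\mathbb E\|J_3^\e\|_{\mathcal C_r^t}^p\le C\int_0^t \mathbb E\|\tilde u^\e-\bar u^\e\|_{\mathcal C_r^s}^p\,ds,
\end{align*}
with $C=C(r,p,N,T)$. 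Note that these two pieces actually require only $\delta\ge 0$, paralleling the fact that the skeleton equation is already solvable in that regime.

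The main work concerns the stochastic term $J_2^\e$. Since the $\mathcal C_r^T$-norm involves a supremum over the unbounded half-line $x\in\mathbb R_+$, a pointwise Burkholder--Davis--Gundy bound is not enough; one must upgrade it to a modulus-of-continuity estimate and then invoke a Kolmogorov-type criterion for two-parameter fields. I would combine BDG with Condition \ref{con 1}(IV), the kernel estimates of Lemma \ref{6_1}(ii)--(iii), and the a priori bound $\mathbb E\|\tilde u^\e\|_{\mathcal C_r^T}^p<\infty$ (inherited from the Hambly--Kalsi estimate on $u^\e$ and transferred to $\tilde u^\e$ via Girsanov, using $\|g^\e\|_{\mathcal H}\le N$) to derive
\begin{align*}
\mathbb E\bigl|e^{-ry}J_2^\e(s,y)-e^{-rx}J_2^\e(t,x)\bigr|^p\le C\,\e^{p/2}\bigl(|s-t|^\alpha+|x-y|^\beta\bigr)
\end{align*}
for some $\alpha,\beta>0$. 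The decay $e^{-\delta y}$ with $\delta>0$ from Condition \ref{con 1}(IV) is indispensable here, since it is precisely what makes the quadratic variation of the martingale finite at every $(t,x)$ in the unbounded domain and allows the compactification of the spatial parameter. The Kolmogorov continuity theorem (or a Garsia--Rodemich--Rumsey argument) then produces $\mathbb E\|J_2^\e\|_{\mathcal C_r^T}^p\le C(r,p,N,R,T,\delta)\,\e^{p/2}\to 0$ as $\e\to 0$.

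Combining the three pieces yields
\begin{align*}
\mathbb E\|\tilde u^\e-\bar u^\e\|_{\mathcal C_r^t}^p\le C\int_0^t \mathbb E\|\tilde u^\e-\bar u^\e\|_{\mathcal C_r^s}^p\,ds+C\,\e^{p/2},\qquad t\in[0,T],
\end{align*}
and Gronwall's inequality gives the conclusion for $p>4$; the case $p\ge 1$ follows by Jensen. The principal obstacle is the uniform-in-$x$ bound for the stochastic convolution $J_2^\e$ on the unbounded half-line; it is here that the hypothesis $\delta>0$ plays an essential role, in sharp contrast with the proof of Condition \ref{cond1}(a), which required only $\delta\ge 0$.
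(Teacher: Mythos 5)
Your decomposition and Gronwall closure are exactly the paper's: apply the obstacle comparison \eqref{eq comp} to reduce to $\tilde v^\e-\bar v^\e$, split into the $f$-difference, the stochastic convolution, and the $\sigma\,\dot g^\e$-difference (the paper's $Q_{1,\e},Q_{2,\e},Q_{3,\e}$), estimate the first and third via Lemma \ref{lem int} resp.\ Lemma \ref{6_1}(i) with $\|g^\e\|_{\mathcal H}\le N$, and absorb them into a Gronwall inequality. The only genuine deviation is how you handle the stochastic term $J_2^\e$. You propose re-deriving the $\mathcal C_r^T$-sup bound for the stochastic convolution from scratch via BDG plus a Kolmogorov/GRR modulus-of-continuity argument on the field $(t,x)\mapsto e^{-rx}J_2^\e(t,x)$. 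That is exactly what the paper's cited Lemma \ref{lem int4} (Hambly--Kalsi, Proposition 4.8) already packages: given $\sigma(\cdot,\tilde u^\e)\in L^p(\Omega;L^\infty([0,T];\mathcal L_{r-\delta}))$ (which is where $\delta>0$ enters, exactly as you observe), it yields $\mathbb E\|J_2^\e\|_{\mathcal C_r^T}^p\le \e^{p/2}C_{6,2}R^p\,\mathbb E\int_0^T(1+\|\tilde u^\e\|_{\mathcal C_r^t})^p\,dt$ directly. Invoking it is cleaner; re-proving it is fine but redundant here. Two small corrections: the required moment bound $\mathbb E\|\tilde u^\e\|_{\mathcal C_r^T}^p<\infty$ is established in the paper's Lemma \ref{lemma-5-1} by a direct stopping-time plus Gronwall argument on $\tilde v^\e$, not by a Girsanov transfer from $u^\e$ (your Girsanov route works since $\|g^\e\|_{\mathcal H}\le N$ a.s., but it needs a Cauchy--Schwarz step to control moments under the changed measure); and the threshold "$p>4$" is not enough --- with the Hölder exponents $\frac{p-4}{4}$ (time) and $\frac{p-4}{2}$ (space) from Lemma \ref{6_1}, a two-parameter Kolmogorov argument forces $p>12$, which is precisely the restriction in Lemma \ref{lem int4}; the general $p\ge 1$ case then follows by the monotonicity of $L^p$-norms, as you note.
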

    Before proving Proposition \ref{proposition-4-3}, we give the following lemma in preparation.
    \begin{lemma}\label{lemma-5-1} Assume  that Condition \ref{con 1} holds with some $\delta>0$.
Then, for every $N\geq 1$, $\{g^\e\}_{\e>0}\subset \mathcal A_N$ and for any $p\geq 1$, we have
    \begin{align}\label{lemma5.1resulit}
    \mathbb{E}\left[\|\tilde{u}^{\e}\|_{\mathcal C_{r}^T}^p\right]<+\infty \ \ \  \text{and }\ \  \mathbb{E}\left[\|\bar{u}^{\e}\|_{\mathcal C_{r}^T}^p\right]<+\infty.
    \end{align}
    \end{lemma}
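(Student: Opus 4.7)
Following the structure of the proof of Proposition \ref{thm solu skeleton}, I would decompose $\tilde u^\e = \tilde v^\e + \tilde z^\e$, where $\tilde v^\e$ is the mild part constructed from $\tilde u^\e$ itself (the heat semigroup applied to $u_0$, plus the $f$-drift, the stochastic convolution $\sqrt{\e}\int_0^{\cdot}\!\int_0^{\infty} G\,\sigma\,dW$, and the $g^{\e}$-drift), and $(\tilde z^\e,\tilde\eta^\e)$ is the solution of the obstacle problem on $\RR_+$ with obstacle $-\tilde v^\e$. Applying Theorem \ref{thm DOP} with $v_1=-\tilde v^\e$ and $v_2=0$ (for which $z_2=0$) yields $\|\tilde z^\e\|_{\mathcal C_r^T}\le C_{3,1}\|\tilde v^\e\|_{\mathcal C_r^T}$, and hence $\|\tilde u^\e\|_{\mathcal C_r^T}\le (1+C_{3,1})\|\tilde v^\e\|_{\mathcal C_r^T}$. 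The same reduction works for $\bar u^\e$, so it suffices to bound the $p$-th moments of $\|\tilde v^\e\|_{\mathcal C_r^T}$ and of the analogous $\bar v^\e$.

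\textbf{Term-by-term estimates.} The initial contribution $\int_0^{\infty}G(t,x,y)u_0(y)\,dy$ is uniformly bounded in $\mathcal C_r^T$ since $u_0\in\mathcal C_r$. The $f$-term is controlled by the linear growth in Condition \ref{con 1} (II) combined with Lemma \ref{6_1} (i), producing a bound of the form $C_1+C_2\int_0^t\|\tilde u^\e\|_{s,\mathcal L_r}^p\,ds$. The $g^{\e}$-drift is estimated exactly as in \eqref{smallbound310}: H\"older's inequality together with $\|g^{\e}\|_{\mathcal H}\le N$, Condition \ref{con 1} (IV), and Lemma \ref{6_1} (i) yields a bound of the same Gronwall-ready form. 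For the stochastic convolution I would apply the Burkholder-Davis-Gundy inequality, which reduces its $p$-th moment to the $(p/2)$-th moment of
\begin{equation*}
\int_0^t\!\int_0^{\infty} G_r(t-s,x,y)^2\,e^{-2ry}\,|\sigma(y,\tilde u^\e(s,y))|^2\,dy\,ds;
\end{equation*}
here the decay $e^{-\delta y}$ in Condition \ref{con 1} (IV) with $\delta>0$ is what makes the spatial integral finite uniformly in $x$, after which a further application of Lemma \ref{6_1} (i) leads to another Gronwall-type bound in $\int_0^t \mathbb E[\|\tilde u^\e\|_{s,\mathcal L_r}^p]\,ds$.

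\textbf{Gronwall, joint sup, and main obstacle.} Summing the four contributions produces an inequality of the form
\begin{equation*}
\mathbb E\bigl[\|\tilde u^\e\|_{t,\mathcal L_r}^p\bigr]\le C+C'\int_0^t \mathbb E\bigl[\|\tilde u^\e\|_{s,\mathcal L_r}^p\bigr]\,ds,
\end{equation*}
with constants depending only on $r,p,N,T,\delta$ and the constants of Condition \ref{con 1}, so Gronwall's lemma gives $\sup_{t\in[0,T]}\mathbb E[\|\tilde u^\e\|_{t,\mathcal L_r}^p]<\infty$. To upgrade to the joint supremum in $(t,x)$ inside the expectation, I would take $p$ large enough and combine the H\"older-continuity estimates in Lemma \ref{6_1} (ii)--(iii) with a Kolmogorov-type continuity argument applied to $e^{-rx}\tilde u^\e(t,x)$ on $[0,T]\times \RR_+$ (equivalently, the factorization method for the stochastic convolution). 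The bound for $\bar u^\e$ is strictly simpler, since the stochastic term is absent and $\delta>0$ is not essential. The principal obstacle is precisely the stochastic convolution on the half-line: controlling its weighted sup-norm in $L^p$ requires the decay $e^{-\delta y}$ with $\delta>0$ in Condition \ref{con 1} (IV)---otherwise the spatial integral against $G_r^2$ fails to be integrable at infinity---which is exactly the structural hypothesis already used in \cite{HK2019}.
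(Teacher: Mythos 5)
Your plan follows the same overall route as the paper's proof: decompose $\tilde u^\e=\tilde v^\e+\tilde z^\e$ via the obstacle map, use Theorem \ref{thm DOP} (with the zero obstacle as comparison) to reduce to a moment bound for $\tilde v^\e$, estimate the four contributions to $\tilde v^\e$ term by term, and close with Gronwall. Two points deserve attention before this can be considered a complete proof.

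First, the Gronwall step as you state it is not justified: you arrive at
$\mathbb E[\|\tilde u^\e\|_{t,\mathcal L_r}^p]\le C+C'\int_0^t\mathbb E[\|\tilde u^\e\|_{s,\mathcal L_r}^p]\,ds$
and conclude finiteness, but a priori you do not know that the left-hand side is finite (or even locally integrable in $t$), so the inequality could be $\infty\le\infty$ and Gronwall yields nothing. The paper handles this by localizing with the stopping times $\tau_n:=\inf\{t\ge0:\sup_{x\ge0}e^{-rx}|\tilde u^\e(t,x)|\ge n\}$, bounding $\mathbb E[\|\tilde v^\e\|_{\mathcal C_r^{T\wedge\tau_n}}^p]$ by a constant independent of $n$, and then letting $n\to\infty$. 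This localization is essential and should appear in your argument.

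Second, the entire BDG-plus-Kolmogorov/factorization sub-argument you sketch for the stochastic convolution is exactly what Lemma \ref{lem int4} (\cite[Proposition 4.8]{HK2019}) already packages: for $p>12$ it bounds $\mathbb E\big[\sup_{\tau\le t}\sup_{x\ge0}|e^{-rx}\int_0^\tau\!\int_0^\infty G(\tau-s,x,y)u(s,y)W(dy,ds)|^p\big]$ by $C\,\mathbb E[\int_0^t\|u\|_{s,\mathcal L_{r-\varepsilon}}^p\,ds]$ whenever $u\in L^p(\Omega;L^\infty([0,T];\mathcal L_{r-\varepsilon}))$. Applying it with $u=\sigma(\cdot,\tilde u^\e)$ and $\varepsilon=\delta$ gives the joint $(t,x)$-supremum in $L^p$ directly, so no separate upgrade step is needed; you would be re-deriving a cited lemma. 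Relatedly, your heuristic that the decay $e^{-\delta y}$ is needed ``to make the spatial integral finite'' is not quite right: $\int_0^\infty G_r(t-s,x,y)^2\,dy$ is already finite uniformly in $x$ by Lemma \ref{6_1} (i) with $\delta=0$. What $\delta>0$ actually buys is the extra weight drop from $\mathcal L_r$ to $\mathcal L_{r-\delta}$ required by Lemma \ref{lem int4} for the sup-over-$x$ $L^p$ control of the stochastic convolution; this is precisely why the skeleton equation (Proposition \ref{thm solu skeleton}, no stochastic term) only needs $\delta=0$ while this lemma needs $\delta>0$.
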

    \begin{proof}  
     Let
\begin{equation}\label{SPDE g etildev123}
 \begin{split}
  \tilde{v}^\e(t,x)=&\, \int_0^{\infty}G(t, x,y) u_0(y)dy +\int_0^{t}\int_0^{\infty}G(t-s, x,y)f(y, \tilde{u}^\e(s,y)) dyds\\
&\, +\sqrt{\e}\int_0^{t}\int_0^{\infty}G(t-s, x,y)\sigma(y, \tilde{u}^\e(s,y)) W(dy,ds)\\
 &\, +\int_0^{t}\int_0^{\infty}G(t-s, x,y)\sigma(y, \tilde{u}^\e(s,y)) \dot{g}^\e(s,y) dyds\\
 =:&\,J_{1,\e}(t,x)+J_{2,\e}(t,x)+J_{3,\e}(t,x)+J_{4,\e}(t,x).
  \end{split}
  \end{equation}
  Set $\tilde{z}^\e=\tilde{u}^\e-\tilde{v}^\e$. Then $ (\tilde{z}^\e,\tilde{\eta}^\e)$ be the solution of Eq.\,\eqref{eq DOP} with the obstacle $-\tilde{v}^\e$.
   For any $n\geq1$, let
    $$
    \tau_n:=\inf\left\{t\geq0:\sup_{x\geq0}e^{-rx}|\tilde{u}^\e(t,x)|\geq n\right\},
    $$
    and let $J^n_{i,\e}(t,x):=J_{i,\e}(t\wedge\tau_n,x)$, $i=1,\cdots, 4$.
 Since $u_0\in \mathcal{C}_r$, there exists a positive constant $C_{5,1}=C_{5,1}(r,p,T)$ such that
\begin{equation}\label{Je1}
\begin{split}
\mathbb{E}[\|J^n_{1,\e}\|_{\mathcal C_r^T}^p]
=&\,\mathbb{E}\left[\sup_{t\in[0,T]}\sup_{x\geq0}\left|\int_0^\infty G_r(t\wedge \tau_n,x,y)e^{-ry}u_0(y)dy\right|\right]^p\\
\leq&\,\mathbb{E}\left[\sup_{t\in[0,T]}\sup_{x\geq0}\left|\int_0^\infty G_r(t\wedge \tau_n,x,y)dy\cdot\|u_0\|_{\mathcal{L}_r}\right|\right]^p\\
\leq&\,C_{5,1}\mathbb{E}\left[\|u_0\|_{\mathcal{C}_r}^p\right],
\end{split}
\end{equation}
where in the above last inequality we used $\int_0^\infty G_r(t,x,y)dy\leq C_{5,2}(r,t)\leq C_{5,2}(r,T)$ for any $t\in[0,T]$ from (4.8) in \cite{HK2019}. 
By Lemma \ref{lem int}, Condition \ref{con 1} (II) and H\"older's inequality, we have that for any $p>4$,
\begin{equation}\label{Je2}
\begin{split}
\mathbb{E}\left[\|J^n_{2,\e}\|_{\mathcal C_r^T}^p\right] \le &\, C^p_{6, 1}\cdot\mathbb{E}\left[\int_0^T\|f(\cdot,\tilde{u}^\e)\|_{{t\wedge\tau_n}, \mathcal L_r} dt \right]^p\\
 \le &\,C^p_{6, 1}\cdot C^p_{1, 2} \cdot \mathbb{E}\left[\int_0^T\left(1+\|\tilde{u}^\e\|_{{t\wedge\tau_n}, \mathcal L_r} \right)dt \right]^p\\
\le &\, C^p_{6, 1}\cdot C^p_{1, 2}\cdot T^{p-1} \cdot\mathbb{E}\left[\int_0^T\left(1+\|\tilde{u}^\e\|_{\mathcal C_r^{t\wedge\tau_n}}\right)^pd{t}\right].
\end{split}
\end{equation}
Since $\tilde{u}^\e \in  L^p(\Omega; L^\infty([0,T\wedge\tau_n]; \mathcal L_r ))$, by Condition \ref{con 1} (IV), we have
\begin{align*}
\sigma(\cdot, \tilde{u}_n^\e({\cdot\wedge\tau_n},\cdot))\in L^p(\Omega; L^\infty([0,T\wedge\tau_n]; \mathcal L_{r-\delta} )).
\end{align*}
Thus, by Lemma \ref{lem int4} Condition \ref{con 1} (IV), we have that for any $p>12$,
   \begin{equation}\label{Je3}
   \begin{split}
 \mathbb{E}\left[\|J^n_{3,\e}\|_{\mathcal C_r^T}^p\right]
 \leq&\,\e^{\frac{p}{2}}\cdot C_{6, 2}\cdot\mathbb{E}\left[\int_0^T\|\sigma(\cdot, \tilde{u}^\e)\|^p_{{t\wedge\tau_n},\mathcal L_{r-\delta}}dt\right]\\
  \leq&\,\e^{\frac{p}{2}}\cdot C_{6, 2}\cdot R^p\cdot\mathbb{E}\left[\int_0^T\left(1+\|\tilde{u}^\e\|_{{t\wedge\tau_n}, \mathcal L_r} \right)^pdt\right]\\
  \leq&\,\e^{\frac{p}{2}}\cdot C_{6, 2}\cdot R^p\cdot\mathbb{E}\left[\int_0^T\left(1+\|\tilde{u}^\e\|_{\mathcal C_{r}^{t\wedge\tau_n}} \right)^pd{t}\right].
\end{split}
\end{equation}
As in \eqref{smallbound310}, by Condition \ref{con 1} (IV), we have that for any $p>4$,
\begin{align*}
 &\left| e^{-rx} \int_0^{t\wedge\tau_n}\int_0^{\infty}G({t\wedge\tau_n}-s, x, y)  \sigma(y, \tilde{u}^\e(s,y)) )   \dot{g}^\e(s,y) dyds \right|^p\\
\le &R^p\cdot  \left( \int_0^{t\wedge\tau_n}\left(\int_0^{\infty} G_r({t\wedge\tau_n}-s, x, y)^2dy\right)^{\frac{p}{p-2}} ds\right)^{\frac{p-2}{2}}\cdot \int_0^{t\wedge\tau_n}\left(1+\left\|\tilde{u}^\e\right\|_{s,\mathcal L_r}\right)^{p}ds\\
&\quad\cdot
\left| \int_0^T\int_0^{\infty}{ \dot{g}}^\e(s,y)^2 dyds \right|^{\frac{p}{2}}.
  \end{align*}
Since $\{g^{\e}\}_{\e>0}\subset \mathcal A_N$ for some $N\geq 1$,  by Lemma \ref{6_1} (i), we have that  for any $p>4$,
\begin{equation}\label{Je4}
\begin{split}
 \mathbb{E}\left[\|J^n_{4,\e}\|_{\mathcal C_r^T}^p\right]
\le &\, C_{5,3}\cdot \mathbb{E}\left[ \int_0^T\left(1+\left\|\tilde{u}^\e\right\|_{\mathcal C_r^{t\wedge\tau_n}}\right)^{p}d{t}\cdot
\left| \int_0^T\int_0^{\infty}{ \dot{g}}^\e(s,y)^2 dyds \right|^{\frac{p}{2}}\right]\\
\le &\, C_{5,3}\cdot N^{p}\cdot \mathbb{E}\left[ \int_0^T\left(1+\left\|\tilde{u}^\e\right\|_{\mathcal C_r^{t\wedge\tau_n}}\right)^{p}d{t}\right],
\end{split}
\end{equation}
where $C_{5,3}=C_{6,3}\cdot R^p\cdot T^{\frac{p-4}{4}}$.

Putting \eqref{SPDE g etildev123}, \eqref{Je1}, \eqref{Je2}, \eqref{Je3} and \eqref{Je4} together, by Gronwall's inequality, we obtain that there exists a constant $C_{5,4}=C_{5,4}\left(\e,r, p, N,T,\|u_0\|_{\mathcal C_r}\right)$ such that
\begin{align*}
\mathbb{E}\left[\|\tilde{v}^{\e}\|_{\mathcal C_r^{T\wedge \tau_n}}^p\right]\leq\,C_{5,4},
\end{align*}
where $C_{5,4}$ is independent on $n$. Letting $n\rightarrow\infty$, we have 
\begin{align*}
\mathbb{E}\left[\|\tilde{v}^{\e}\|_{\mathcal C_r^T}^p\right]\leq\,C_{5,4}.
\end{align*}
By Theorem \ref{thm DOP}, we have 
\begin{align*}
\mathbb{E}\left[\|\tilde{u}^{\e}\|_{\mathcal C_r^T}^p\right]\leq 2^p\cdot C^p_{3,1}\mathbb{E}\left[\|\tilde{v}^{\e}\|_{\mathcal C_r^T}^p\right]<\infty.
\end{align*}

The proof  for $\bar u^{\e}$ is similar and is omitted here. The proof is complete.
    \end{proof}

    \begin{proof}[Proof of Proposition \ref{proposition-4-3}]
   Recall $\tilde{v}^\e(t,x)$ defined by \eqref{SPDE g etildev123}, and  let
 \begin{equation}\label{SPDE g ebarv}
 \begin{split}
  \bar{v}^\e(t,x)=&\, \int_0^{\infty}G(t, x,y) u_0(y)dy +\int_0^t\int_0^{\infty}G(t-s, x,y)f(y, \bar{u}^\e(s,y)) dyds\\
 &\, +\int_0^t\int_0^{\infty}G(t-s, x,y)\sigma(y, \bar{u}^\e(s,y)) \dot{g}^\e(s,y) dyds.
  \end{split}
  \end{equation}
 Then $(\tilde{u}^\e-\tilde{v}^\e,\tilde{\eta}^\e)$ and $(\bar{u}^\e-\bar{v}^\e,\bar{\eta}^\e)$ are the solutions of Eq.\,\eqref{eq DOP} with  the   obstacles $-\tilde{v}^\e$ and $-\bar{v}^\e$, respectively.

By  Theorem \ref{thm DOP},    \eqref{SPDE g e ske}, \eqref{SPDE g e}, \eqref {SPDE g etildev123} and \eqref{SPDE g ebarv}, we obtain that
\begin{align}\label{tildeue-barue22}
& \|\tilde{u}^\e-\bar{u}^\e\|_{\mathcal C_r^T}\notag\\
\le&\,  2 C_{3, 1}\|\tilde{v}^\e-\bar{v}^\e\|_{\mathcal C_r^T}\notag\\
\le & \,  2  C_{3, 1} \sup_{t\in [0,T]}\sup_{x\ge0} e^{-rx} \left|\int_0^t\int_0^{\infty}G(t-s, x,y) \left( f(y, \tilde{u}^\e(s,y))- f(y, \bar{u}^\e(s,y))  \right) dyds \right|\notag\\
 &\,+2C_{3, 1} \sup_{t\in [0,T]}\sup_{x\ge0} e^{-rx} \left|\sqrt{\e}\int_0^t\int_0^{\infty}G(t-s, x,y)\sigma(y, \tilde{u}^\e(s,y)) W(dy,ds) \right|\\
&\,+ 2  C_{3, 1} \sup_{t\in [0,T]}\sup_{x\ge0} e^{-rx} \left|\int_0^t\int_0^{\infty}G(t-s, x,y) \left( \sigma(y, \tilde{u}^\e(s,y))- \sigma(y, \bar{u}^\e(s,y))  \right) \dot{g}^\e(s,y) dyds \right|\notag\\
=:&\, 2  C_{3, 1}\left[Q_{1,\e}(T)+Q_{2,\e}(T)+Q_{3,\e}(T)\right].\notag
\end{align}
As in  \eqref{I_1p11}, we have that for any $p>4$,
\begin{equation}\label{I_1p22}
\begin{split}
\mathbb{E}\left[Q_{1,\e} (T)^p\right]
\le \,  C^p_{6, 1}\cdot C^p_{1, 1}\cdot T^{p-1}\cdot\mathbb{E}\left[ \int_0^T\|\tilde{u}^\e-\bar{u}^\e\|^p_{\mathcal C_r^t}dt\right].
\end{split}
\end{equation}
Since $\tilde{u}^\e \in  L^p(\Omega;\mathcal C_r^T )$, by Condition \ref{con 1} (IV), we have
\begin{align*}
\sigma(\cdot, \tilde{u}^\e(\cdot,\cdot))\in L^p(\Omega;\mathcal C_{r-\delta}^T).
\end{align*}
As in \eqref{Je3},  by Lemma \ref{lemma-5-1}, we have that for any $p>12$,
   \begin{equation}\label{I_1p221}
   \begin{split}
   \mathbb{E}[Q_{2,\e}(T)^p]
  \leq&\,\e^{\frac{p}{2}}\cdot C_{6, 2}\cdot R^p\cdot\mathbb{E}\left[\int_0^T\left(1+\|\tilde{u}^\e\|_{\mathcal C_{r}^t}\right)^pdt\right]
  \rightarrow0,\,\,\emph{as}\,\,\e\rightarrow0.
\end{split}
\end{equation} 
As in \eqref{Je4},  by Condition \ref{con 1} (III), we have that for any $p>4$, 
\begin{equation}\label{I_3p22}
\begin{split}
Q_{3,\e} (T)^p  
\le &\, C^p_{1,3} \cdot C_{6,3} \cdot T^{\frac{p-4}{4}} \cdot N^p\int_0^T\|\tilde{u}^\e-\bar{u}^\e\|_{\mathcal C_r^t}^pdt.
\end{split}
\end{equation}
Putting \eqref{tildeue-barue22}, \eqref{I_1p22},  \eqref{I_1p221} and  \eqref{I_3p22} together,   by the Gronwall inequality, we obtain that for any $p>12$,
  \begin{align*}
 \mathbb E\left[\|\tilde{u}^\e-\bar{u}^\e\|^p_{\mathcal C_r^T}\right]\leq \mathbb E\left[Q_{2,\e} (T) ^p\right]\left(1+ C_{5,5}T \cdot e^{  C_{5,5}T }\right)\rightarrow0, \ \ \text{as } \e\rightarrow0,
 \end{align*}
 where $C_{5,5}= 2^p\cdot C^p_{3, 1}\cdot C^p_{6, 1}\cdot C^p_{1, 1}\cdot T^{p-1}+C^p_{1,3} \cdot C_{6,3} \cdot T^{\frac{p-4}{4}} \cdot N^p$. 

 The proof is complete.
    \end{proof}

\section{Appendix}
Recall   the  functions $G(t,x,y)$   and $G_r(t,x,y)$   given by \eqref{eq kernel} and \eqref{eq Gr}, respectively. 
\begin{lemma}\cite[Lemma 4.5]{HK2019}\label{lem int}
Let $r\in \mathbb R$. Suppose that $u\in L^1([0,T];\mathcal L_r)$. Then we have that for any $t\in [0,T]$, there exists a positive constant $C_{6, 1}=C_{6, 1}(r,T)$ such that
 \begin{align*}
 \sup_{\tau\in[0,t]}\sup_{x\ge0}\left|e^{-rx}\int_0^{\tau}\int_0^{\infty} G(\tau-s, x,y)u(s,y)dyds \right|\le C_{6, 1}\int_0^t\|u\|_{s, \mathcal L_r}ds,
 \end{align*}
 \end{lemma}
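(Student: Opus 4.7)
My plan is to reduce the lemma to a single uniform bound on the weighted Dirichlet heat kernel mass $\int_0^\infty G_r(t,x,y)\,dy$ and then apply Fubini together with the definition of $\|u\|_{s,\mathcal L_r}$. I would proceed in three short steps.

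First, I absorb the spatial weight inside the kernel by writing $e^{-rx}G(\tau-s,x,y) = e^{-ry}\,G_r(\tau-s,x,y)$, and then bound $|e^{-ry}u(s,y)|\le \|u\|_{s,\mathcal L_r}$ inside the integral. Since $G_r\ge 0$, this reduces the lemma to showing the pointwise estimate
\begin{align*}
\sup_{t\in(0,T]}\sup_{x\ge 0}\int_0^\infty G_r(t,x,y)\,dy \le C(r,T)
\end{align*}
for some finite constant $C(r,T)$ depending only on $r$ and $T$.

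Second, I would prove this uniform kernel bound by a direct Gaussian computation. Because $(x-y)^2\le (x+y)^2$ for $x,y\ge 0$, the subtracted image term in the definition of $G$ is nonnegative, so
\begin{align*}
\int_0^\infty G_r(t,x,y)\,dy \le \frac{e^{-rx}}{\sqrt{4\pi t}}\int_0^\infty \exp\!\left(ry-\frac{(y-x)^2}{4t}\right)dy.
\end{align*}
Completing the square in $y$ as $-(y-x-2rt)^2/(4t) + rx + r^2 t$ and enlarging the domain of integration from $[0,\infty)$ to $\mathbb R$ yields the clean bound $e^{r^2 t}\le e^{r^2 T}$, so one may take $C_{6,1}(r,T)=e^{r^2 T}$.

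Third, I would plug this bound into the estimate obtained in the first step; since the resulting integrand in $s$ is independent of $x$ and of $\tau$ (after estimation), the suprema over $\tau\in[0,t]$ and $x\ge 0$ can be absorbed, producing
\begin{align*}
\sup_{\tau\in[0,t]}\sup_{x\ge 0}\left|e^{-rx}\int_0^\tau\!\!\int_0^\infty G(\tau-s,x,y)u(s,y)\,dy\,ds\right| \le C_{6,1}\int_0^t \|u\|_{s,\mathcal L_r}\,ds.
\end{align*}

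The only point of care—and the main, though quite mild, obstacle—is the Gaussian computation in the second step: one must keep track of the sign of $r$ and use that the image term $\exp(-(x+y)^2/(4t))$ in the Dirichlet kernel is nonnegative so that it may simply be discarded rather than estimated from below. All the $T$-dependence of the final constant enters through the single factor $e^{r^2 T}$, and the fact that $u\in L^1([0,T];\mathcal L_r)$ ensures the right-hand side is finite.
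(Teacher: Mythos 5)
Your argument is correct, and it is essentially the same route as the cited source: the paper imports this lemma from Hambly--Kalsi without reproducing a proof, but it elsewhere invokes precisely your key bound $\sup_{t\le T}\sup_{x\ge 0}\int_0^\infty G_r(t,x,y)\,dy\le C(r,T)$ (referring to (4.8) of that paper), and once that is in hand the reduction via $e^{-rx}G=e^{-ry}G_r$, the pointwise bound $|e^{-ry}u(s,y)|\le\|u\|_{s,\mathcal L_r}$, and Fubini is the standard and intended argument. Your completion-of-the-square computation giving $C_{6,1}=e^{r^2T}$ is valid for every sign of $r$, and discarding the nonnegative image term in $G$ before integrating is exactly the right move.
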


\begin{lemma}\cite[Proposition 4.8]{HK2019}\label{lem int4} Let $r\in \mathbb R$. Suppose that $u\in L^p(\Omega; L^{\infty}([0,T]; \mathcal L_{r-\varepsilon}))$. Then for any $t\in [0,T]$ and for any $p>12$, there exists a positive constant $C_{6, 2}=C_{6, 2}(r,p, T,\varepsilon)$ such that
\begin{align*}
& \mathbb E\left[ \sup_{\tau\in[0,t]}\sup_{x\ge0}\left|e^{-rx}\int_0^{\tau}\int_0^{\infty} G(\tau-s, x,y)u(s,y)W(dy,ds) \right|^p\right]\\
\le&\,  C_{6,2}\mathbb E\left[ \int_0^t\|u\|_{s, \mathcal L_{r-\varepsilon}}^pds\right].
\end{align*}
 \end{lemma}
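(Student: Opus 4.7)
The plan is to apply the Da Prato--Kwapie\'n--Zabczyk factorization method, adapted to the Dirichlet heat kernel $G$ on $\mathbb R_+$ with the exponential weight $e^{-rx}$. Fix $\alpha\in(1/p,1/2)$ to be chosen below. Combining the semigroup property of $G$ with the Beta identity $\int_s^\tau(\tau-r)^{\alpha-1}(r-s)^{-\alpha}\,dr=\pi/\sin(\pi\alpha)$, a stochastic Fubini argument yields the representation
\begin{equation*}
\int_0^\tau\!\!\int_0^\infty G(\tau-s,x,y)u(s,y)\,W(dy,ds)=\frac{\sin(\pi\alpha)}{\pi}\int_0^\tau(\tau-r)^{\alpha-1}\!\!\int_0^\infty G(\tau-r,x,z)Y_\alpha(r,z)\,dz\,dr,
\end{equation*}
where $Y_\alpha(r,z):=\int_0^r\!\int_0^\infty(r-s)^{-\alpha}G(r-s,z,y)u(s,y)\,W(dy,ds)$. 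The outer operator is now a deterministic Bochner integral, so the $\sup_{(\tau,x)}$ of its $p$-th power can be handled pathwise by H\"older's inequality, while $Y_\alpha$ is a Walsh stochastic integral whose $L^p(\Omega)$-moments are accessible through the Burkholder--Davis--Gundy (BDG) inequality.

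For the outer factor I would multiply by $e^{-rx}$, rewrite $e^{-rx}G(\tau-r,x,z)=G_r(\tau-r,x,z)e^{-rz}$, and apply H\"older jointly in $(r,z)$ with conjugate exponent $q=p/(p-1)$. The weighted-kernel bound $\sup_{x\ge 0}\int_0^\infty G_r(t,x,z)^q\,dz\lesssim t^{(1-q)/2}$, which follows by completing the square in the Gaussian (and is of the same type as Lemma~\ref{lem int} in the appendix), renders the deterministic factor
\begin{equation*}
\sup_{\tau\in[0,t]}\sup_{x\ge 0}\Bigl(\int_0^\tau(\tau-r)^{(\alpha-1)q}\int_0^\infty G_r(\tau-r,x,z)^q\,dz\,dr\Bigr)^{p/q}
\end{equation*}
finite, provided $\alpha>3/(2p)$. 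Fubini then reduces the task to controlling $\mathbb E\int_0^t\!\int_0^\infty|e^{-rz}Y_\alpha(r,z)|^p\,dz\,dr$. By BDG and the It\^o--Walsh isometry,
\begin{equation*}
\mathbb E|e^{-rz}Y_\alpha(r,z)|^p\le C\,\mathbb E\Bigl[\Bigl(\int_0^r\!\int_0^\infty(r-s)^{-2\alpha}G(r-s,z,y)^2 e^{-2rz}u(s,y)^2\,dy\,ds\Bigr)^{p/2}\Bigr].
\end{equation*}
I would then bound $u(s,y)^2\le e^{2(r-\varepsilon)y}\|u\|_{s,\mathcal L_{r-\varepsilon}}^2$, pull the norm out by a final H\"older in $s$, and compute the inner Gaussian integral $\int_0^\infty e^{-2rz+2(r-\varepsilon)y}G(r-s,z,y)^2\,dy$ explicitly; completing the square produces a net factor $e^{-2\varepsilon z}$ (times polynomial factors in $r-s$), so that the subsequent $z$-integration on $\mathbb R_+$ converges precisely because of the $\varepsilon$-gap. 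The remaining $r$-integral collapses to the claimed $\mathbb E\int_0^t\|u\|_{s,\mathcal L_{r-\varepsilon}}^p\,ds$.

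The main obstacle will be the simultaneous bookkeeping of several competing constraints: $\alpha$ must exceed $3/(2p)$ for the outer Bochner factor to be continuous in $(\tau,x)$ but stay strictly below $1/2$ so that the singularity $(r-s)^{-2\alpha}$ remains integrable after BDG, and the subsequent H\"older in $s$ further restricts $\alpha$ to roughly $\alpha<1/4-1/p$; matching these together with the exponent $q$ in the weighted-kernel bound, while relying on the $\varepsilon$-gap to absorb the mismatch between the outside weight $e^{-rx}$ and the internal weight $e^{-(r-\varepsilon)y}$ built into $\|u\|_{s,\mathcal L_{r-\varepsilon}}$, is what ultimately forces the threshold $p>12$.
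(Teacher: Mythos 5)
This lemma is cited from \cite[Proposition 4.8]{HK2019}; the paper does not prove it, so there is no internal proof to compare your attempt against. Your factorization (Da Prato--Kwapie\'n--Zabczyk) route is the standard alternative and the outline is essentially sound: the semigroup property of $G$ plus the Beta identity give the representation with $Y_\alpha$, the weighted-kernel bound $\sup_x\int_0^\infty G_r(t,x,z)^q\,dz\lesssim t^{(1-q)/2}$ follows by completing the square as you say, the $\varepsilon$-gap absorbs $e^{-2rz+2(r-\varepsilon)y}$ into an $e^{-2\varepsilon z}$ making the $z$-integral converge, and BDG plus H\"older in $s$ close the estimate.

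One inaccuracy worth flagging: your two competing constraints are $\alpha>3/(2p)$ (convergence of the outer Bochner factor after H\"older with $q=p/(p-1)$) and $\alpha<1/4-1/p$ (convergence of the $s$-integral after the $p/(p-2)$-H\"older following BDG). These are simultaneously satisfiable precisely when $3/(2p)<1/4-1/p$, i.e.\ $p>10$, not $p>12$. So your argument actually proves the lemma for the slightly larger range $p>10$; the assertion that these constraints ``ultimately force the threshold $p>12$'' is not correct. The $p>12$ threshold in \cite{HK2019} is more naturally explained by a Kolmogorov / Garsia--Rodemich--Rumsey type argument built on the space-time increment estimates of Lemma~\ref{6_1} (H\"older exponents $(p-4)/4$ in time and $(p-4)/2$ in space, each needing to exceed the respective ``dimension'' yields $p>12$), which is a genuinely different route from the factorization you use. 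Either way your strategy would establish the claimed bound; you would, of course, also need to justify the stochastic Fubini step used to pass to the factorized representation, which holds under the stated $L^p(\Omega;L^\infty([0,T];\mathcal L_{r-\varepsilon}))$ hypothesis. Finally, note the cosmetic collision of the symbol $r$, used both as the weight parameter and as the factorization time variable.
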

\begin{lemma}\cite[Proposition A.1]{HK2019}\label{6_1} Fix  $r\in \mathbb R$, $T>0$. Then, for any $p>4$, we have the following results:
\begin{itemize}
\item[(i)] For every $t, s\in[0,T]$, there exists a positive constant $C_{6, 3}=C_{6, 3}(r,p, T)$ such that
\begin{align*}
\sup_{x\ge0}\left( \int_s^t\left[\int_{0}^{\infty}G_r(t-u, x,z)^2dz \right] ^{\frac{p}{p-2}}du \right)^{\frac{p-2}{2}}\le C_{6, 3}|t-s|^{\frac{p-4}{4}}.
\end{align*}
\item[(ii)] For every $t, s\in[0,T]$, there exists a positive constant $C_{6, 4}=C_{6, 4}(r,p, T)$ such that
\begin{align*}
\sup_{x\ge0}\left( \int_0^s\left[\int_{0}^{\infty}\left|G_r(t-u, x,z)-G_r(s-u, x,z)\right|^2dz \right] ^{\frac{p}{p-2}} du \right)^{\frac{p-2}{2}}\le C_{6,4}|t-s|^{\frac{p-4}{4}}.
\end{align*}
\item[(iii)] For every $x,y\in[0,\infty)$, there exists a positive constant $C_{6, 5}=C_{6, 5}(r,p, T)$ such that
\begin{align*}
\sup_{s\in[0,T]}\left( \int_0^s\left[\int_{0}^{\infty}\left|G_r(s-u, x,z)-G_r(s-u, y,z)\right|^2dz \right] ^{\frac{p}{p-2}} du \right)^{\frac{p-1}{2}}\le C_{6,5}|x-y|^{\frac{p-4}{2}}.
\end{align*}
\end{itemize}
\end{lemma}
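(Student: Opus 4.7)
\textbf{Proof proposal for Lemma \ref{6_1}.} These three estimates are standard upper bounds for the Dirichlet heat kernel $G$ on the half-line (in the weighted $L^2$ sense), and the plan is to reduce them to Gaussian integrals. The crucial preliminary step is the uniform-in-$x$ $L^2$ bound
\begin{equation*}
\sup_{x\ge 0}\int_0^\infty G_r(\tau,x,z)^2\,dz \;\le\; C(r,T)\,\tau^{-1/2},\qquad \tau\in (0,T],
\end{equation*}
which I would obtain by dominating the Dirichlet kernel by its free counterpart $G(\tau,x,z)\le (4\pi\tau)^{-1/2}\exp(-(x-z)^2/(4\tau))$, writing $G_r(\tau,x,z)^2=e^{-2r(x-z)}G(\tau,x,z)^2$, completing the square in the exponent (producing the factor $e^{2r^2\tau}\le e^{2r^2T}$), and evaluating the resulting Gaussian integral on $\RR$.

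For part (i), inserting the above bound yields
\begin{equation*}
\Blk\int_0^\infty G_r(t-u,x,z)^2\,dz\Brk^{\frac{p}{p-2}}\le C(r,p,T)\,(t-u)^{-\frac{p}{2(p-2)}},
\end{equation*}
and the hypothesis $p>4$ is exactly what makes the exponent strictly greater than $-1$, so the $u$-integral over $[s,t]$ equals $C(t-s)^{(p-4)/(2(p-2))}$; raising to the power $(p-2)/2$ gives the claimed $(t-s)^{(p-4)/4}$.

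For part (ii), the main obstacle is controlling $\int_0^\infty |G_r(t-u,x,z)-G_r(s-u,x,z)|^2\,dz$ in a way that is both integrable near $u=s$ and produces an explicit power of $|t-s|$. I would proceed via the Fourier representation of the free kernel, noting that on $\RR$ the difference reduces to $|e^{-(t-u)\xi^2/2}-e^{-(s-u)\xi^2/2}|^2 = e^{-(s-u)\xi^2}(1-e^{-(t-s)\xi^2/2})^2$, and then apply the elementary interpolation inequality $|1-e^{-a}|\le a^{\theta}$ for $\theta\in[0,1]$. Choosing $\theta$ appropriately (together with Plancherel) yields a bound of the schematic form $C(t-s)^{\theta}(s-u)^{-1/2-\theta}$; the Dirichlet kernel's image term is handled by the same dominating argument as above. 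One then picks $\theta$ so that $\tfrac{p}{p-2}\cdot(\tfrac12+\theta)<1$ (possible precisely when $p>4$), integrates in $u$, and raises to $(p-2)/2$, producing a power of $|t-s|$ whose exponent can be balanced to equal $(p-4)/4$.

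Part (iii) is analogous in the spatial variable. Using translation invariance of the free kernel (plus the image correction) I would write $G(\tau,x,z)-G(\tau,y,z)$ as an integral of $\partial_x G$ from $y$ to $x$, obtaining after Fourier/Plancherel a bound of the form $\int_0^\infty |G_r(\tau,x,z)-G_r(\tau,y,z)|^2\,dz \le C\tau^{-1/2}\min(1,(x-y)^2/\tau)$, or alternatively $C|x-y|^{2\theta}\tau^{-1/2-\theta}$ for $\theta\in[0,1]$. Choosing $\theta$ with $\tfrac{p}{p-2}(\tfrac12+\theta)<1$ and performing the same integration and raising-to-power argument as in part (ii) yields $C|x-y|^{(p-4)/2}$. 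The main technical point throughout is the exponent bookkeeping that forces the threshold $p>4$ and matches the Hölder exponents; once the $L^2$-bound and its difference versions are in hand, the rest of the argument is an exercise with Minkowski/Hölder and Gaussian integrals, and indeed the precise statements appear as \cite[Proposition A.1]{HK2019}.
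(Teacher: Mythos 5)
The paper does not prove this lemma at all; it is quoted verbatim from \cite[Proposition A.1]{HK2019} and used as a black box, so there is no internal proof to compare your sketch against.

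Your treatment of part (i) is correct and is the standard route: the uniform bound $\sup_{x}\int_0^\infty G_r(\tau,x,z)^2\,dz\le C(r,T)\tau^{-1/2}$ gives an integrand $(t-u)^{-p/(2(p-2))}$ in $u$, the exponent is $<1$ exactly when $p>4$, and the $u$-integral followed by the outer power $(p-2)/2$ returns $(t-s)^{(p-4)/4}$ on the nose.

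For part (ii), however, the exponent bookkeeping as you describe it does not close. With the interpolation bound you use, the best separable estimate is of the form
$\int_0^\infty|G_r(t-u,x,z)-G_r(s-u,x,z)|^2\,dz\le C(t-s)^{\alpha}(s-u)^{-1/2-\alpha}$
(any $\alpha\in[0,2]$). Integrability of the $u$-integral then forces the strict inequality $\alpha<(p-4)/(2p)$, and after raising to the power $(p-2)/2$ the resulting exponent of $|t-s|$ is $\alpha p/2$, which is therefore \emph{strictly} less than $(p-4)/4$. You cannot ``balance'' $\alpha$ up to $(p-4)/(2p)$: at that borderline value the $u$-integral diverges. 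So the naive single-parameter interpolation only yields a H\"older exponent strictly below the one claimed. Two standard fixes close the gap. One is a scaling argument: substitute $\xi\mapsto\xi/\sqrt{t-s}$ in the Plancherel integral and $u\mapsto s-(t-s)w$ in the outer integral; the whole left side then factors as $(t-s)^{(p-4)/(2(p-2))}\int_0^{s/(t-s)}h(w)^{p/(p-2)}\,dw$ with $h(w)=\int_{\RR} e^{-2w\eta^2}(1-e^{-\eta^2})^2\,d\eta\lesssim\min(w^{-1/2},w^{-5/2})$, and the profile integral converges on $(0,\infty)$ precisely when $p>4$. The other is a dyadic split of the $u$-integral at $s-u=t-s$: near $u=s$ use the triangle inequality and part (i)'s bound; away from $u=s$ use your interpolation bound with $\alpha$ \emph{above} $(p-4)/(2p)$ so the tail integral is dominated by its lower endpoint. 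Both pieces give $(t-s)^{(p-4)/(2(p-2))}$, hence $(t-s)^{(p-4)/4}$ after the outer power. Either route is what you actually need; the single-$\alpha$ interpolation as written does not reach the stated exponent.

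Part (iii) as printed has outer exponent $(p-1)/2$, under which your interpolation bookkeeping does in fact have enough slack (the required $\theta=(p-4)(p-2)/(2p(p-1))$ sits strictly inside the admissible interval). Be aware, though, that the application of this lemma in \eqref{spacexyminus1} uses the outer exponent $(p-2)/2$, which strongly suggests $(p-1)/2$ in the statement is a misprint; with $(p-2)/2$, part (iii) suffers exactly the same borderline issue as part (ii) and requires the scaling or splitting fix.
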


\vskip0.5cm
\noindent{\bf Acknowledgments}:    The research of  R. Wang  is partially supported by  NNSFC grants 11871382 and 12071361. The research of B. Zhang is
partially supported by NNSFC grants 11971361 and 11731012.

 \vskip0.5cm

\medskip

\bigskip

\end{document}